\documentclass{amsart}
\usepackage{amsthm,amsmath,amsfonts,amssymb,array,enumerate,verbatim,enumitem}
\usepackage[all]{xy}
\usepackage{url}

\newcommand{\ZZ}{\mathbb{Z}}

\newcommand{\QQ}{\mathbb{Q}}
\newcommand{\cO}{\mathcal{O}}

\newcommand{\fm}{\mathfrak{m}}

\newcommand{\vL}{\  \big\rvert \ }
\DeclareMathOperator{\Hom}{Hom}

\DeclareMathOperator{\Ind}{Ind}

\DeclareMathOperator{\End}{End}

\DeclareMathOperator{\Spec}{Spec}

\DeclareMathOperator{\Gal}{Gal}
\DeclareMathOperator{\Fr}{Fr}
\DeclareMathOperator{\tr}{tr}

\DeclareMathOperator{\Char}{Char}
\DeclareMathOperator{\Sp}{Sp}
\DeclareMathOperator{\Sw}{Sw}

\DeclareMathOperator{\rk}{rk}

\newtheorem{thm}{Theorem}[section]
\newtheorem{lemma}[thm]{Lemma}
\newtheorem{prop}[thm]{Proposition}
\newtheorem{cor}[thm]{Corollary}
\newtheorem{defn}[thm]{Definition}

\newtheorem{rmk}[thm]{Remark}

\numberwithin{equation}{section}

\title{Deligne--Langlands gamma factors in families}
\date{\today}
\author{David Helm}
\address{David Helm\\
Imperial College, London}
\email{d.helm@imperial.ac.uk}

\author{Gilbert Moss}
\address{Gilbert Moss\\
Institut de Math\'{e}matiques de Jussieu}
\email{gil.moss@imj-prg.fr}

\begin{document}
\begin{abstract}
Let F be a $p$-adic field, $W_F$ its absolute Weil group, and let $k$ be an algebraically closed field of prime characteristic $\ell$ different from $p$. Attached to any $\ell$-adic representation of $W_F$ are local epsilon- and $L$-factors. There are natural notions of families of $\ell$-adic  representations of $W_F$, such as the theory of Galois deformations or, more generally, families over arbitrary Noetherian $W(k)$-algebras. However, the epsilon and $L$-factors do not interpolate well in such families. In this paper it is shown that the gamma factor, which is the product of the epsilon factor with a ratio of $L$-factors, interpolates over such families.
\end{abstract}
\keywords{gamma factors, Galois representations, families}
\subjclass[2010]{11F80, 11S15, 11F85, 11S40}
\maketitle
\section{Introduction}
\subsection{Motivation}
Let $F$ be a $p$-adic field whose residue field has order $q$, let $G_F$ be its absolute Galois group with respect to some fixed algebraic closure, and let $W_F \subset G_F$ be its Weil group.  For any prime $\ell\neq p$, and any $\ell$-adic representation $\rho$ of $W_F$ there is attached a rational function $L(\rho,X):=\det\left(I-\Fr X\vL \rho^{I_F}\right)^{-1}$, where $\rho^{I_F}$ denotes the inertial invariants and $\Fr$ is a geometric Frobenius element. In \cite[Th\'{e}or\`{e}me 4.1]{deligne72}, Deligne uses global methods to prove the existence of a local constant $\epsilon(\rho,\psi,dx)$ compatible with induction. One can also form the gamma factor,
$$\gamma(\rho,X,\psi):=\epsilon(\rho,\psi,dx)X^{a(\rho)}\frac{L(\rho^{\vee},\frac{1}{qX})}{L(\rho,X)},$$ where $a(\rho)$ is the Artin conductor. The gamma factors of certain twists uniquely determine $\rho$ up to semisimplification (e.g. \cite{hen_converse,jl}).

The deformation theory of Galois representations provides a natural notion of families of Galois representations, which are representable by spectra of complete local Noetherian $W(k)$-algebras where $k$ is an algebraically closed field of characteristic $\ell$. In section ~\ref{arbitrary}, we build on recent work of the first author~(\cite{curtis}) in developing a well-behaved notion of ``families of continuous representations of $W_F$'' over arbitrary Noetherian $W(k)$-algebras. 

However, $\epsilon(\rho,\psi, dx)$ and $L(\rho,X)$ are not compatible with reduction mod-$\ell$, and so do not interpolate well in such families. Our main result is showing that $\gamma(\rho,X,\psi)$ interpolates over such families.

\subsection{Statement of the main result}
Given $R$ a commutative ring, let $S$ be the multiplicative subset of $R[X,X^{-1}]$ consisting of Laurent polynomials whose first and last coefficients are units. The fraction ring $S^{-1}(R[X,X^{-1}])$ is a generalization of the ring of rational functions to the setting where $R$ is not necessarily a domain. The interpolated gamma factor forms an element of this ring:
\begin{thm}
\label{mainthm}
To any tuple $(R,\rho,\psi)$, where $R$ is a Noetherian $W(k)$-algebra, $\rho$ is an $\ell$-adically continuous representation $\rho:W_F\to GL_n(R)$ in the sense of Section ~\ref{arbitrary} below, and $\psi:F\rightarrow W(k)^{\times}$ is a locally constant character, we may associate an element $\gamma_R(\rho,X,\psi)\in S^{-1}(R[X,X^{-1}])$ such that the following properties are satisfied:
\begin{enumerate}
\item Let $R'$ be a Noetherian $W(k)$-algebra and let $f:R\rightarrow R'$ be a morphism.  Then, extending $f$ to a homomorphism $f:S^{-1}R[X,X^{-1}]\rightarrow S^{-1}R'[X,X^{-1}]$, we have $$f(\gamma_R(\rho,X,\psi)) = \gamma_{R'}(\rho\otimes_RR',X,\psi).$$
\item Suppose in addition that $R$ is the fraction field of a complete local domain of characteristic zero with residue field $k$ and $\rho$ is an $\ell$-adic representation over $R$ (see Definition \ref{elladicrepdef}). Then $\gamma_R(\rho,X,\psi)$ is equal to the classical Deligne--Langlands gamma factor $\gamma(\rho,X,\psi)$ of Equation \ref{gammadefnincharzero}.
\end{enumerate}
The association $(R,\rho,\psi) \mapsto \gamma_R(\rho,X,\psi)$ is uniquely determined by
the above properties, and additionally satisfies:
\begin{enumerate}[resume]
\item Given an exact sequence $0\rightarrow \rho'\rightarrow \rho\rightarrow \rho''\rightarrow 0$, we have $\gamma_R(\rho,X,\psi) = \gamma_R(\rho',X,\psi)\gamma_R(\rho'',X,\psi)$.
\item If $E/F$ is a finite separable extension and $r_E$ is a virtual smooth representation of $W_E$ with virtual degree zero, then $\gamma_R(\Ind_{E/F}r_E, X,\psi)=\gamma_R(r_E,\psi\circ\tr_{E/F})$.
\end{enumerate}
\end{thm}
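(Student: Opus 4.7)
The proof would proceed by first establishing uniqueness and then constructing an object satisfying the two characterizing properties. The guiding idea is that classical Deligne--Langlands gamma factors at characteristic-zero fraction-field specializations (property (2)) should propagate to arbitrary Noetherian $W(k)$-algebras via base change (property (1)), provided one can show these specializations separate elements of $S^{-1}R[X,X^{-1}]$; existence then requires a construction over some ``universal'' family, relying on the finiteness of the wild inertia action available because $\ell \neq p$.

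For uniqueness I would prove a separation lemma asserting that, for any Noetherian $W(k)$-algebra $R$ carrying an $\ell$-adically continuous representation $\rho$, the natural map
$$S^{-1}R[X,X^{-1}] \to \prod S^{-1}R'[X,X^{-1}]$$
is injective as $R'$ ranges over homomorphic images that are fraction fields of characteristic-zero complete local domains with residue field $k$ such that $\rho \otimes_R R'$ becomes an $\ell$-adic representation in the sense of Definition~\ref{elladicrepdef}. Granting this, any two candidate assignments satisfying properties (1) and (2) must agree. I expect this injectivity to be the first technical hurdle; it should follow from Noetherianity of $R$ together with the existence of sufficiently many formal lifts of any residual representation to characteristic-zero complete local domains, arranged so as to preserve the $\ell$-adic representation condition.

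For existence I would construct $\gamma_R$ in two stages. First, building on the framework of~\cite{curtis}, realise every continuous family as pulled back from a (pro-)universal family of $W_F$-representations over a Noetherian $W(k)$-algebra $R^{\mathrm{univ}}$. Because $\ell \neq p$, the wild inertia subgroup acts through a finite quotient in any such family, so the Artin conductor and the degrees of the polynomial data entering the $L$- and $\epsilon$-factors are uniformly bounded over $R^{\mathrm{univ}}$. Under this bound I would glue the classical gamma factors at the characteristic-zero points of $R^{\mathrm{univ}}$ into a single element of $S^{-1}R^{\mathrm{univ}}[X,X^{-1}]$, and then transfer to $(R,\rho)$ by pulling back along the classifying map. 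The principal obstacle is precisely this gluing step: producing a single algebraic object whose specializations at all characteristic-zero points match the classical gamma factor, despite the fact that $L$-factors individually jump and do not interpolate.

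Properties (3) and (4) would then follow from uniqueness. One defines tentative alternative assignments using either multiplicativity in short exact sequences or inductivity from virtual degree-zero representations, taking these to reduce to the classical value whenever $R$ is a characteristic-zero fraction field (where the identities are due to Deligne--Langlands), verifies property (1) by direct computation, and concludes equality with $\gamma_R$ from the uniqueness assertion.
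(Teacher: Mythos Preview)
Your high-level architecture is sound and matches the paper: reduce to a universal family $R^{\nu}$ (reduced, $\ell$-torsion free), establish the gamma factor there, pull back, and use density of characteristic-zero points for uniqueness. Your derivation of properties~(3) and~(4) from uniqueness is also correct in spirit.

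However, there is a genuine gap at the step you yourself flag as the ``principal obstacle''. You propose to ``glue the classical gamma factors at the characteristic-zero points of $R^{\mathrm{univ}}$ into a single element of $S^{-1}R^{\mathrm{univ}}[X,X^{-1}]$'', but you offer no mechanism for doing so. Knowing that the degrees are uniformly bounded does not help: the dimension of $\rho^{I_F}$ jumps across $\Spec R^{\nu}$, so the numerator and denominator of $L(\rho^{\vee},\tfrac{1}{qX})/L(\rho,X)$ change degree, and there is no a priori reason the classical values should be specializations of a single rational function. The paper does \emph{not} attempt any such gluing. Instead, after splitting off the totally wildly ramified summand (where $\gamma=\epsilon_0$ and Yasuda's result applies), it writes down an \emph{explicit} candidate over $R^{\nu}$ in the tamely ramified case:
\[
\gamma_R(\rho,X,\psi)\;=\;\epsilon_0(\rho,\psi)\,X^{\Sw_R(\rho)+(\dim\rho)(n(\psi)+1)}\cdot
\frac{\det(1+\sigma+\cdots+\sigma^{q-1})\,\Char(\Fr)(X)}{\Char\big((1+\sigma+\cdots+\sigma^{q-1})\Fr\big)(X)},
\]
where $\sigma$ is a topological generator of tame inertia. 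This expression visibly lives in $S^{-1}R[X,X^{-1}]$ (once one checks $\det(1+\sigma+\cdots+\sigma^{q-1})$ is a unit), and the entire content of Theorem~\ref{restofgamma} is the nontrivial identity
\[
\frac{\det(1+\sigma+\cdots+\sigma^{q-1})\,\Char(\Fr)(X)}{\Char\big((1+\sigma+\cdots+\sigma^{q-1})\Fr\big)(X)}
\;=\;\frac{q^{\dim\rho^{I}}\,L(\rho,qX)}{L(\rho,X)}
\]
for semisimple tamely ramified $\rho$ over a characteristic-zero field, proved via the classification of irreducible tame representations as inductions from admissible pairs. This formula is the missing idea: it replaces the non-interpolable inertial invariants by characteristic polynomials of fixed group-ring elements, which interpolate trivially. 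Without something playing this role, your gluing step has no content.
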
 
\subsection{Relationship to previous work}
This result is the natural analogue, on the Galois side of the local Langlands correspondence, of prior work of the second author on interpolating gamma factors over families of admissible representations
of $GL_n(F)$.  There is the standard gamma factor for pairs of such representations \cite{jps2}, and a notion of families of such representations over arbitrary Noetherian $W(k)$-algebras, namely the ``co-Whittaker families'' of \cite{h_whitt}.  The second author's thesis \cite{moss1,moss2} shows that the gamma factors of \cite{jps1,jps2} interpolate nicely across such families (whereas the $L$- and $\epsilon$-factors do not) and satisfy a converse theorem in families.

The local Langlands correspondence is the unique sequence $\{\pi_n\}$ of canonical bijections from the set of $n$-dimensional Frobenius-semisimple Weil-Deligne representations of $W_F$ to the set of irreducible admissible smooth representations of $GL_n(F)$ (up to isomorphism). The correspondences $\pi_n$ are canonical in the sense that $\gamma(\rho\otimes \rho',X,\psi)=\gamma(\pi_n(\rho)\times \pi_m(\rho'),X,\psi)$, where the right-hand side is the standard gamma factor of \cite{jps2}, and $\rho$ and $\rho'$ have dimensions
$n$ and $m$ respectively. 

It is expected that the local Langlands correspondence is compatible with variation in families. This idea is due to Emerton and the first author in \cite{eh}.  Conjecture 1.3.1 of \cite{eh} associates a co-Whittaker module to any family of Galois representations in a way that interpolates the local Langlands correspondence across the family. Therefore, the good interpolation properties of the gamma factor for admissible representations established in \cite{moss1,moss2} \emph{should} have analogues on the Galois side of the local Langlands correspondence. Our main result, Theorem \ref{mainthm}, confirms this expectation.

The question of interpolating the local constant $\epsilon(\rho,\psi,dx)$ has been addressed in \cite{deligne72}, \cite{yasuda}, and \cite{kestutis}. Deligne introduces in \cite{deligne72} the modified epsilon constant $\epsilon_0(\rho,\psi,dx)=\epsilon(\rho,\psi,dx)\det\left(-\Fr\vL \rho^{I_F}\right)$ and shows that it is compatible with reduction mod $\ell$, thus interpolating it to discrete valuation rings of residue characteristic $\neq p$. By following Laumon in the positive characteristic setting, Yasuda constructs $\epsilon_0(\rho,\psi,dx)$ using the geometric Fourier-Deligne transform, extending $\epsilon_0(\rho,\psi,dx)$ to the setting of Noetherian local rings with residue field $k$ such that $(R^{\times})^p=R^{\times}$. In \cite{kestutis}, \v{C}esnavi\v{c}ius uses formal commutative algebra to interpolate $\epsilon_0(\rho,\psi,dx)$ over normal integral $\ZZ[\frac{1}{p}]$-schemes.

Like $\epsilon_0$, the gamma factor is compatible with specialization, but is arguably a richer object than $\epsilon_0$. Indeed, $\gamma(\rho,X,\psi)$ is a formal power series (rather than a constant) that encodes additional information about the Frobenius eigenvalues. Converse theorems (e.g. \cite{hen_converse,jl}) imply that collections of Deligne--Langlands gamma factors of twists uniqely determine $\rho$ up to semisimplification. Nonetheless, there is little mention of the Deligne--Langlands gamma factor in the literature (however, an interesting interpolation property of a ratio of adjoint gamma values has been observed in \cite{gross_reeder}).

\subsection{Applications}

By \cite[Thm 7.9]{h_whitt}, to prove the local Langlands correspondence in families it suffices to construct a map from the integral Bernstein center (which serves as the base of a
``universal co-Whittaker family'') to the universal base ring $R^{\nu}$ of $\ell$-adic $W_F$-representations (see Theorem \ref{universal}) that is ``compatible with local Langlands,'' in a certain sense. Since the initial version of this note was written, the authors have found a proof that such a map exists, thereby proving Conjecture 1.3.1 of \cite{eh}. The proof appears in a pair of recent preprints, \cite{converse} and \cite{curtis}. Theorem \ref{mainthm} of the present paper plays a key role in the proof.

For the sake of motivation, if we take for granted the local Langlands correspondence in families (as stated in \cite{converse}) we can describe two properties our gamma factors are expected to satisfy. First, the converse theorem \cite[Thm 1.1]{moss2} would imply that, when $R$ is reduced, finite-type, and flat (e.g. $R^{\nu}$), the collection of $\gamma_R(\rho\otimes\rho',X,\psi)$, for suitable twists $\rho'$, determines $\rho$ up to semisimplification. 

Second, let $\fm$ be a maximal ideal of $R^{\nu}$. There would be a map of $W(k)$-modules, call it $\theta_{\nu,n}:R^{\nu}\to W(k)$, which is the analogue of the map $\theta_{e,n}$ in \cite[\S 3]{converse}. Then Corollary 6.5 of \cite{converse} would imply that the images in $id\otimes\theta_{\nu,n}$ of the coefficients of $\gamma(\rho^{\nu}\otimes\rho',X,\psi)$, for various $\rho'$, provides a set of topological generators for the subring $R_{\overline{\rho^{\nu}_{\fm}}}^{\square,\text{inv}}$ of frame invariant elements of the framed deformation ring $R_{\overline{\rho^{\nu}_{\fm}}}^{\square}$.


\subsection{Outline}
Section \ref{arbitrary} extends the notion of $\ell$-adically continuous representations of $W_F$ to arbitrary Noetherian $W(k)$-algebras, relying on results in \cite{curtis}. 

Section \ref{swan} develops the Swan conductor in the general setting of Section \ref{arbitrary} by interpolating a construction of Katz. 

Section \ref{fieldsection} gathers relevant aspects of the classical theory of Dwork--Langlands--Deligne local constants in the setting of fields. 

Section \ref{epsilon0} extends the results of \cite{yasuda} on $\epsilon_0$, reducing the problem of interpolating $\gamma(\rho,X,\psi)$ to the problem of interpolating a ratio of $L$-functions.

Section \ref{provingrestofgamma} approaches the ratio of $L$-functions in the tamely ramified setting using new and quite explicit techniques, c.f. Theorem \ref{restofgamma}. 

Section \ref{gammaforrings} combines Theorem \ref{restofgamma} and Proposition \ref{epsilon_0reducedelltorsionfree} to deduce Theorem \ref{mainthm}.

\subsection{Acknowledgements}

The first author was supported by EPSRC grant\\ EP/M029719/1. The second author is grateful to K\c{e}stutis \u{C}esnavi\u{c}ius and Keenan Kidwell for several helpful comments. Both authors are grateful to Guy Henniart for his enthusiasm and comments, and for pointing out the simpler proof of Lemma \ref{permutationdiagonal}.

\section{Families over arbitrary Noetherian base rings}
\label{arbitrary}
Recall that a $W(k)$-algebra $R$ is said to be {\em $\ell$-adically separated} if the intersection
of the ideals $\ell^i R$ is equal to the zero ideal. We will denote by $K^i$ the kernel of the map $GL_n(R) \to GL_n(R/\ell^i R)$.

\begin{defn}
\label{ladiccontinuity} 
\begin{enumerate}
\item
Let $R$ be an $\ell$-adically separated Noetherian $W(k)$-algebra, and let $\rho: W_F \rightarrow GL_n(R)$ be a homomorphism.  We say $\rho$ is $\ell$-adically continuous if, for all $i$, the preimage of the subgroup $K^i$ of $GL_n(R)$ under $\rho$ is open in $W_F$.
\item More generally, if $R$ is an arbitrary Noetherian $W(k)$-algebra, we say  
$\rho$ is $\ell$-adically continuous if there is an affine open cover $U_i=\Spec R_i$ of $\Spec R$ such that, for each $i$, there exists an $\ell$-adically separated Noetherian $W(k)$-algebra $R_i'$ with a map $R_i'\to R$ and an $\ell$-adically continuous representation $\rho_i:W_F\to GL_n(R_i')$ such that $\rho\otimes_RR_i\cong \rho_i\otimes_{R_i'}R_i$. (Colloquially: it arises, Zariski-locally, by base-change from objects in Definition \ref{ladiccontinuity}(1)).
\end{enumerate}
\end{defn}

\begin{lemma}
\label{finitequotient}
Let $R$ be a Noetherian $W(k)$-algebra and $\rho: W_F \rightarrow GL_n(R)$ an $\ell$-adically continuous representation, and let $I_F$ be the inertia subgroup. If $H$ is a closed subgroup of $I_F$ of pro-order prime to $\ell$, then $\rho(H)$ is finite.
\end{lemma}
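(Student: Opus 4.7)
My plan is first to reduce to the case where $R$ is $\ell$-adically separated via the Zariski-local structure of Definition \ref{ladiccontinuity}(2), and then to show that the image of $H$ lives inside a profinite group whose ``congruence kernel'' is pro-$\ell$. Since $H$ has pro-order prime to $\ell$, this congruence kernel must be trivial, leaving a finite image.

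For the reduction, I choose a finite affine cover $\Spec R = \bigcup_i \Spec R_i$ and, for each $i$, an $\ell$-adically separated Noetherian $W(k)$-algebra $R_i'$ with a map $R_i' \to R_i$ and an $\ell$-adically continuous $\rho_i: W_F \to GL_n(R_i')$ satisfying $\rho \otimes_R R_i \cong \rho_i \otimes_{R_i'} R_i$. Since $R \hookrightarrow \prod_i R_i$ (exactness of the affine cover complex), also $GL_n(R) \hookrightarrow \prod_i GL_n(R_i)$. If each $\rho_i(H) \subset GL_n(R_i')$ is finite, then so is its image in $GL_n(R_i)$, and therefore so is $\rho(H)$. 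Thus it suffices to treat the case where $R$ is itself $\ell$-adically separated.

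Under this hypothesis, $\ell$-adic continuity makes $\rho^{-1}(K^i)$ open in $W_F$ for each $i$; intersecting with the compact $H$, the finite quotient $\rho_i(H) := \rho(H)/(\rho(H) \cap K^i)$ makes sense. Separatedness of $R$ gives an embedding
$$\rho(H) \;\hookrightarrow\; G \;:=\; \varprojlim_i \rho_i(H),$$
where $G$ is a profinite group. The continuous homomorphism $H \to G$ has dense image (surjective at every finite level) and $H$ is compact, so it is surjective. Now let $N$ be the kernel of $G \to \rho_1(H)$. Each successive quotient $\ker(\rho_i(H) \to \rho_{i-1}(H))$ is a subquotient of $K^{i-1}/K^i$, which one checks to be annihilated by $\ell$ (compute $(I + \ell^{i-1}B)^\ell$ modulo $\ell^i$ for $i \ge 2$); hence $N$ is pro-$\ell$. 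On the other hand, $N$ is a continuous quotient of the preimage of $N$ in $H$, which is a closed subgroup of $H \subset I_F$, hence of pro-order prime to $\ell$. A pro-$\ell$ group of pro-order prime to $\ell$ is trivial, so $N = 1$, $G = \rho_1(H)$ is finite, and $\rho(H)$ is finite.

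The substantive point is the tension between the pro-$\ell$ nature of congruence kernels in $GL_n$ modulo powers of $\ell$ and the prime-to-$\ell$ order of $H$; once this is set up the conclusion is formal. I expect the main technical nuisance to be the initial Zariski-local reduction, which must invoke that a Noetherian cover can be taken finite and that $R$ injects into the product of the $R_i$; these are routine but need to be acknowledged explicitly because the definition of $\ell$-adic continuity is only intrinsic in the separated case.
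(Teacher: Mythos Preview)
Your argument is correct and follows essentially the same route as the paper: reduce to the $\ell$-adically separated case, then exploit that the successive congruence quotients $K^{i}/K^{i+1}$ are $\ell$-torsion to force the part of $\rho(H)$ landing in $K^1$ to be trivial. The paper phrases this as a direct induction showing $\rho(H')\subset K^i$ for all $i$ (with $H'=H\cap\rho^{-1}(K^1)$), while you package the same induction as the vanishing of the pro-$\ell$ kernel $N$ in the inverse limit $G$; these are equivalent formulations of one argument.
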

\begin{proof}
It suffices to prove this when $R$ is $\ell$-adically separated.  Let $H'$ be the intersection of $H$ with the kernel of the composition $W_F\to GL_n(R)\to GL_n(R/\ell R)$.  For each $i$, the image of $K^i$ in the map $GL_n(R) \to GL_n(R/\ell^{i+1} R)$ is an abelian group of exponent $\ell$. Since $\rho(H')$ is in $K^1\cap \rho(H)$, its image in each map $GL_n(R) \to GL_n(R/ \ell^i R)$ must be trivial. In particular, $\rho(H')$ must lie in $K^i\subset I_n + \ell^i M_n(R)$ for all $i$.  Thus $\rho(H')$ is trivial.
\end{proof}

\begin{cor} \label{cor:sum}
Let $R$ be a Noetherian $W(k)$-algebra and $\rho: W_F \rightarrow GL_n(R)$ an $\ell$-adically continuous
representation.  Let $H$ be a closed subgroup of $I_F$ of pro-order prime to $\ell$.  Then there is a direct sum decomposition:
$$\rho= \bigoplus \rho_{\tau}$$
where $\tau$ runs over the irreducible $k$-representations of $H$, and for each $\tau$, the restriction
of $\rho_{\tau}$ to $H$ is isomorphic to ${\tilde \tau} \otimes_{W(k)} M_{\tau}$, where $\tilde \tau$ is the unique lift of $\tau$ to $W(k)$ and $M_{\tau}$ is a locally free $R$-module.
\end{cor}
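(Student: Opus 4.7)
The plan is to reduce, via Lemma \ref{finitequotient}, to analyzing $\rho$ as a representation of a finite group of order prime to $\ell$, and then decompose using the primitive central idempotents of the corresponding group algebra over $W(k)$. By Lemma \ref{finitequotient}, the image $H_0 := \rho(H)$ is finite with $m := |H_0|$ prime to $\ell$, hence invertible in $W(k)$. Thus $\rho|_H$ factors through $H_0$ and endows $R^n$ with the structure of an $R[H_0]$-module. Since $k$ is algebraically closed and $m$ is invertible in $k$, the Wedderburn decomposition $k[H_0] = \prod_\tau M_{d_\tau}(k)$ is cut out by primitive central idempotents $e_\tau \in k[H_0]$ indexed by the irreducible $k$-representations $\tau$ of $H_0$.

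Because $m$ is invertible in $W(k)$, the group algebra $W(k)[H_0]$ is separable over $W(k)$; equivalently, each $\tau$ admits a unique lift $\tilde\tau$ to $W(k)$ and each $e_\tau$ lifts uniquely to a primitive central idempotent $\tilde e_\tau \in W(k)[H_0]$, satisfying $\sum_\tau \tilde e_\tau = 1$, $\tilde e_\tau \tilde e_{\tau'} = 0$ for $\tau \neq \tau'$, and the Morita isomorphism $\tilde e_\tau W(k)[H_0] \cong M_{d_\tau}(W(k))$. Applying $\rho$, the images $\rho(\tilde e_\tau) \in M_n(R)$ are mutually orthogonal idempotents summing to the identity, producing an $H$-stable decomposition $R^n = \bigoplus_\tau N_\tau$ with $N_\tau := \rho(\tilde e_\tau) R^n$. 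Since $\tilde e_\tau$ acts as the identity on $\tilde\tau$ and annihilates every other irreducible, $N_\tau$ is $\tilde\tau$-isotypic: choosing a primitive idempotent $f_\tau \in \tilde e_\tau W(k)[H_0]$ and setting $M_\tau := \rho(f_\tau) R^n$ yields $N_\tau \cong \tilde\tau \otimes_{W(k)} M_\tau$ as $H$-modules. Both $N_\tau$ and $M_\tau$ are cut out by idempotents in $M_n(R)$ acting on the free module $R^n$, hence are direct summands of $R^n$, and therefore locally free of finite rank over $R$.

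The main input, and really the only nontrivial point, is the unique lifting of the $e_\tau$ and $\tau$ from $k$ to $W(k)$; this is standard modular representation theory in the non-modular case, following either from separability of $k[H_0]$ or from Hensel's lemma applied to the explicit averaging formula for $e_\tau$, which makes sense over $W(k)$ precisely because $m$ is a unit. A minor interpretive remark: the decomposition produced this way is naturally that of $\rho|_H$ as an $R[H]$-module; when $H$ is normal in $W_F$, grouping summands by $W_F$-orbits on the set of $\tau$ yields a $W_F$-stable refinement, which is the form needed in applications such as $H = P_F$ (the wild inertia).
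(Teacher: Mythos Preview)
Your argument is correct and follows essentially the same route as the paper: reduce via Lemma~\ref{finitequotient} to a finite prime-to-$\ell$ quotient, decompose using the central idempotents of its group algebra over $W(k)$, and observe that each piece is a direct summand of $R^n$ and hence locally free. The only cosmetic difference is that the paper extracts $M_\tau$ as $\Hom_{W(k)[H]}(\tilde\tau,\rho_\tau)$ rather than as $\rho(f_\tau)R^n$ for a primitive idempotent $f_\tau$; these are equivalent by Morita theory.
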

\begin{proof}
By the previous lemma there is an $H'$ open normal in $H$ such that $\rho$ is trivial on $H'$.  Set
$\rho_{\tau} = 0$ if $\tau$ is not trivial on $H'$, and $\rho_{\tau} = e_{\tau} \rho$ otherwise, where
$e_{\tau}$ is the idempotent of $W(k)[H/H']$ corresponding to $\tau$.  Since $\tilde \tau$ is a projective
$W(k)[H]$-module that generates the block corresponding to $e_{\tau}$, the natural map:
$$\Hom_{W(k)[H]}(\tilde \tau, \rho_{\tau}) \otimes \tilde \tau \rightarrow \rho_{\tau}$$
is an isomorphism.  Let $M_{\tau} = \Hom_{W(k)[H]}(\tilde \tau, \rho_{\tau}).$
Since $\tilde \tau$ is free as a $W(k)$-module it follows that
$M_{\tau}$ is a direct summand of $\rho$, which is a free $R$-module of finite rank,
and is therefore projective and hence locally free.
\end{proof}

Let $I_F^{(\ell)}$ denote the prime-to-$\ell$ part of $I_F$, that is, the kernel of any, and hence every, surjective homomorphism $I_F \to\ZZ_{\ell}$. Note that Corollary \ref{cor:sum} implies that if $R$ has connected spectrum, and $\rho:W_F \rightarrow GL_n(R)$ is $\ell$-adically continuous then the restriction of $\rho$ to a closed subgroup $H$ of prime-to-$\ell$ inertia is constant, that is, arises by base change from $W(k)$. It also implies $\rho|_H=\rho^{ss}|_H$, where $\rho^{ss}$ denotes the semisimplification of $\rho$.

Let $\nu$ be an $n$-dimensional $k$-representation of $I_F^{(\ell)}$ that extends to $W_F$.  Then $\nu$ has a unique lift $\tilde{\nu}$ to $W(k)$. We will say that an $\ell$-adically continuous representation $\rho: W_F \rightarrow GL_n(R)$ \emph{has type $\nu$} if, Zariski locally on $\Spec R$, the restriction of $\rho$ to $I_F^{(\ell)}$ is isomorphic to $\tilde{\nu}\otimes_{W(k)}R$. Note that if $R$ is connected, then $\rho$ has a type.

\begin{defn} If $\rho$ is an $\ell$-adically continuous representation of $W_F$ over $R$, a {\em pseudo-framing}
of $R$ is a choice of $R$-basis for $M_{\tau}$, for each $\tau$.
\end{defn}

Since the $M_{\tau}$ are in general only locally free over $R$, an arbitrary $\ell$-adically continuous
representation $\rho$ of $W_F$ over $R$ will only admit a pseudo-framing Zariski locally on $R$.  The space of pseudo-framings of $\rho$ is a torsor for an algebraic group $G^{\nu}$, which is a product of groups $GL_{n_{\tau}}$,
where $n_{\tau}$ is the rank of $M_{\tau}$.

\begin{thm}[\cite{curtis}, \S 9] \label{universal} There exists a reduced, finite type, $\ell$-torsion free, $\ell$-adically separated Noetherian $W(k)$-algebra $R^{\nu}$, and an $\ell$-adically continuous representation $\rho_{\nu}: W_F \rightarrow GL_n(R_{\nu})$, with a canonical pseudo-framing,
such that if $R$ is any Noetherian $W(k)$-algebra,
and $\rho: W_F \rightarrow GL_n(R)$ any $\ell$-adically continuous representation having type $\nu$ and
a pseudo-framing, there is a unique map
$R^{\nu} \rightarrow R$ such that $\rho$ is isomorphic to $\rho_{\nu} \otimes_{R^{\nu}} R$ and the pseudo-framing
on $\rho$ is the one induced by base change from the pseudo-framing on $\rho_{\nu}$.

Moreover, if $x: R^{\nu} \rightarrow k$ is any map, and $\rho_x: W_F \rightarrow GL_n(k)$ the corresponding representation, then the completion of $R^{\nu}$ at the kernel of $x$, taken together with the base change of $\rho_{\nu}$ to this completion, gives the universal pseudo-framed deformation of $\rho_x$.
\end{thm}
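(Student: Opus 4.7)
The plan is to construct $R^{\nu}$ explicitly as a finitely generated $W(k)$-algebra parameterizing the handful of matrix entries needed to specify a pseudo-framed representation of type $\nu$ beyond its tautologically fixed restriction to $I_F^{(\ell)}$.

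First I would reduce the problem to a finite amount of data. By Lemma \ref{finitequotient} the lift $\tilde\nu$ factors through a finite quotient $I_F^{(\ell)}/H_0$; after replacing $H_0$ by the intersection of its $W_F$-conjugates it may be assumed normal in $W_F$. The quotient $W_F/H_0$ is then topologically generated by the finite group $I_F^{(\ell)}/H_0$ together with a lift $\phi$ of geometric Frobenius and a topological generator $s$ of the pro-$\ell$ quotient of tame inertia in $W_F/H_0$. For any pseudo-framed representation $(R,\rho,\text{framing})$ of type $\nu$, the framing identifies $\rho|_{I_F^{(\ell)}}$ with $\tilde\nu\otimes_{W(k)}R$ in its standard block form $\bigoplus_\tau \tilde\tau\otimes R^{n_\tau}$. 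Since $\nu$ extends to $W_F$, conjugation by $\phi$ and $s$ permutes the types $\tau$; by Schur's lemma $\rho(s)$ and $\rho(\phi)$ are then forced to be block-permutation matrices whose nonzero blocks lie in the relevant $GL_{n_\tau}(R)$, and the entire representation is encoded by these two matrices subject to finitely many polynomial relations.

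Next I would write down the universal ring. Let $A$ be the $W(k)$-algebra on the free entries of generic block-permutation matrices $S$ and $\Phi$, localized at the determinants of the nonzero blocks; let $J\subset A$ be the ideal generated by the conjugation identities $S\tilde\nu(h)S^{-1}=\tilde\nu(shs^{-1})$ and $\Phi\tilde\nu(h)\Phi^{-1}=\tilde\nu(\phi h\phi^{-1})$ for $h$ ranging over a finite generating set of $I_F^{(\ell)}/H_0$, together with the mixed tame relation $\Phi S\Phi^{-1}=S^{q}$ adjusted by the prescribed wild-inertia correction. Set $R^\nu:=A/J$; it is of finite type over $W(k)$ by construction and carries a tautological pseudo-framing. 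The universal property reads off directly: a pseudo-framed representation of type $\nu$ over a Noetherian $W(k)$-algebra $R$ is exactly the data of matrices $(\rho(s),\rho(\phi))$ satisfying these relations, hence corresponds uniquely to a $W(k)$-algebra map $R^\nu\to R$.

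The main obstacle is then verifying the remaining structural properties: reducedness, $\ell$-torsion-freeness, $\ell$-adic separation, and $\ell$-adic continuity of the universal representation $\rho_\nu$. Continuity amounts to showing $\rho_\nu(s)^{\ell^m}\to 1$ in the $\ell$-adic topology on $GL_n(R^\nu)$, which one arranges by further localizing $A$ to force the eigenvalues of $S$ into the correct shape (roots of unity of prime-to-$\ell$ order in characteristic $\ell$, and pro-$\ell$ units thereafter) as dictated by the mixed relation. Reducedness and $\ell$-torsion-freeness are the most delicate points; both would follow from the schematic density of characteristic-zero points in $\Spec R^\nu$, which reduces to lifting any $k$-valued type-$\nu$ representation to one over a characteristic-zero domain together with a local smoothness calculation at such points exploiting the explicit block-permutation form of $(S,\Phi)$. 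Finally, the moreover statement is formal: restricting the functor represented by $R^\nu$ to complete local Noetherian $W(k)$-algebras with residue field $k$ produces precisely the pseudo-framed deformation functor of $\rho_x$, which is therefore represented by the completion of $R^\nu$ at $\ker(x)$.
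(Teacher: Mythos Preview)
The present paper does not prove this theorem; it is quoted from \cite{curtis}, \S 9, with no argument supplied here. There is therefore nothing in this paper to compare your proposal against.

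Your outline follows the natural strategy---fix $\rho|_{I_F^{(\ell)}}$ via the pseudo-framing as $\tilde\nu\otimes R$, then parameterize the images of a Frobenius lift $\phi$ and a pro-$\ell$ tame generator $s$ by matrix entries subject to finitely many polynomial relations---and the universal property would indeed read off from such a presentation. But there is a genuine gap in your treatment of $\ell$-adic continuity. You say one arranges $\rho_\nu(s)^{\ell^m}\to 1$ by ``further localizing $A$ to force the eigenvalues of $S$ into the correct shape.'' Localization removes closed loci; it cannot impose an $\ell$-adic convergence condition, and eigenvalues are not well-defined over a non-field base. What actually drives continuity is the relation $\Phi S\Phi^{-1}=S^q$ (with the wild correction): iterating gives $S$ conjugate to $S^{q^{n!}}$, so at any field-valued point the eigenvalues of $S$ are $(q^{n!}-1)$-th roots of unity, and over a field of characteristic $\ell$ this forces $\bar S$ to have finite order bounded purely in terms of $n$ and $q$. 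One then needs $S^N\equiv 1\pmod\ell$ to hold over $R^\nu$ itself---this is where reducedness of $R^\nu/\ell R^\nu$, or an explicitly imposed relation, becomes essential---after which the binomial expansion of $(1+\ell T)^{\ell^m}$ gives the required convergence. Your sketch does not supply this step, and the reducedness and $\ell$-torsion-freeness you correctly flag as ``most delicate'' are precisely where the substantive work in \cite{curtis} lies; a local smoothness check at characteristic-zero points alone does not establish them without further input.
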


The universal property of $R^{\nu}$ shows that the scheme $\Spec R^{\nu}$ admits an action of $G^{\nu}$ by ``change of pseudo-framing''.  One can form the quotient stack $(\Spec R^{\nu})/G^{\nu}$, which is a moduli stack parameterizing representations $\rho$ of $W_F$ on locally free $R$-modules that, {\em Zariski locally on $\Spec R$}, are $\ell$-adically continuous representations in the sense of Definition \ref{ladiccontinuity}.  We will not adopt this perspective here, however.

\begin{cor} 
If $R$ is a Noetherian $W(k)$-algebra, and $\rho: W_F \rightarrow GL_n(R)$ is an $\ell$-adically continuous
representation, then, Zariski locally on $\Spec R$, there exists a $\nu$, and a map $R^{\nu} \rightarrow R$ of $W(k)$-algebras, such that $\rho$ is isomorphic to the base change of $\rho^{\nu}$ to $R$.
\end{cor}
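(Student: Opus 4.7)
The plan is to use Zariski localization on $\Spec R$ to put $\rho$ into the hypotheses of Theorem~\ref{universal}, namely that $\rho$ have both a well-defined type $\nu$ and a pseudo-framing; once those two pieces of data are in place, the map $R^\nu \to R$ is handed to us by the universal property.

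First I localize to obtain a type. Because $\Spec R$ is Noetherian it has only finitely many connected components, each of which is open, so after shrinking I may assume $\Spec R$ is connected. By the observation immediately following Corollary~\ref{cor:sum}, a connected base forces $\rho$ to have a type $\nu$; this $\nu$ extends to $W_F$ because $\rho$ itself does.

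Next I localize to produce a pseudo-framing. Apply Corollary~\ref{cor:sum} with the closed subgroup $H = I_F^{(\ell)}$ of $I_F$, whose pro-order is prime to $\ell$ by construction. This yields a decomposition $\rho = \bigoplus_\tau \rho_\tau$ with $\rho_\tau|_H \cong \tilde\tau \otimes_{W(k)} M_\tau$ and each $M_\tau$ locally free over $R$. The rank identity $\sum_\tau (\dim \tau)(\rk_R M_\tau) = n$ shows that only finitely many $M_\tau$ are nonzero, so after a further Zariski shrinkage of $\Spec R$ I may assume that each of these finitely many $M_\tau$ is free of its generic rank over $R$. An ordered $R$-basis for each $M_\tau$ is then exactly a pseudo-framing of $\rho$, in the sense of the definition preceding Theorem~\ref{universal}.

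With $\rho$ now carrying both a type $\nu$ and a pseudo-framing, Theorem~\ref{universal} immediately produces a unique morphism $R^\nu \to R$ of $W(k)$-algebras under which $\rho \cong \rho_\nu \otimes_{R^\nu} R$, compatibly with the pseudo-framings. This is the desired statement on the Zariski open that we have arrived at. No step is really an obstacle: one only has to verify that the two localizations (to make $\Spec R$ connected and to trivialize the finitely many $M_\tau$) can be performed simultaneously, which is immediate, and to note that the set of $\tau$ with $M_\tau \neq 0$ is finite, which is forced by the rank identity above.
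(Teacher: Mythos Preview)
Your proof is correct and is precisely the argument the paper leaves implicit: the corollary is stated without proof because it is meant to follow at once from the sentence ``if $R$ is connected, then $\rho$ has a type'', the remark that a pseudo-framing exists Zariski locally (since the $M_\tau$ are only locally free), and the universal property in Theorem~\ref{universal}. Your write-up simply spells out these steps, including the routine finiteness of the set of $\tau$ with $M_\tau \neq 0$.
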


\begin{cor}
\label{ladicmaxadic}
Let $R$ be a complete local Noetherian $W(k)$-algebra with residue field $k$, and maximal ideal $\fm$. Then $\rho$ is $\ell$-adically continuous in the sense of Definition \ref{ladiccontinuity} if and only if it is $\fm$-adically continuous.
\end{cor}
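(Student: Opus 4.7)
The plan is to prove the two implications separately. The direction ``$\ell$-adically continuous $\Rightarrow$ $\mathfrak{m}$-adically continuous'' is the easy one: since $\ell\in\mathfrak{m}$, one has $\ell^i R\subseteq\mathfrak{m}^i$, and hence $K^i\subseteq K_\mathfrak{m}^i$ as subgroups of $GL_n(R)$. Therefore $\rho^{-1}(K_\mathfrak{m}^i)\supseteq\rho^{-1}(K^i)$; the right-hand side is open in $W_F$ by $\ell$-adic continuity, and a subgroup of $W_F$ containing an open subgroup is itself open.

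For the converse direction, suppose $\rho$ is $\mathfrak{m}$-adically continuous. The proof of Lemma \ref{finitequotient} only used that the successive quotients $K^i/K^{i+1}$ are annihilated by $\ell$, and the analogous quotients $K_\mathfrak{m}^i/K_\mathfrak{m}^{i+1}\cong M_n(\mathfrak{m}^i/\mathfrak{m}^{i+1})$ are $k$-vector spaces, hence also $\ell$-torsion. Running the same argument shows that $\rho(H)$ is finite for every closed subgroup $H\subseteq I_F$ of pro-order prime to $\ell$. In particular, the proof of Corollary \ref{cor:sum} carries through verbatim, producing the canonical decomposition $\rho|_{I_F^{(\ell)}}\cong\bigoplus_\tau\tilde\tau\otimes_{W(k)} M_\tau$ with each $M_\tau$ locally free and hence, since $R$ is local, free. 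Thus $\rho$ has a well-defined type $\nu$ and admits a pseudo-framing globally on $\Spec R$.

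Let $\bar\rho$ be the reduction of $\rho$ modulo $\mathfrak{m}$ and let $x:R^\nu\to k$ be the corresponding $k$-point. By the last assertion of Theorem \ref{universal}, the completion $\widehat{R^\nu_x}$ at $\ker(x)$ is the universal pseudo-framed deformation ring of $\bar\rho$. The deformation-theoretic construction of \cite{curtis} builds this ring as an inverse limit over the Artinian thickenings $R/\mathfrak{m}^i$, in each of which $\ell$ is nilpotent; consequently the universal object classifies pseudo-framed lifts of $\bar\rho$ to complete local Noetherian $W(k)$-algebras in the natural $\mathfrak{m}$-adic sense. Applying this universal property to our $\rho$ produces a unique map $\widehat{R^\nu_x}\to R$ exhibiting $\rho$ as the base change of the universal representation. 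But the universal representation is $\ell$-adically continuous, inherited from the $\ell$-adically continuous $\rho_\nu$ over $R^\nu$; since base change preserves $\ell$-adic continuity, $\rho$ is $\ell$-adically continuous. The delicate point in this plan, and the main potential obstacle, is verifying that the universal pseudo-framed deformation functor from \cite{curtis} really does classify all $\mathfrak{m}$-adically continuous lifts and is not a priori restricted to $\ell$-adically continuous ones; this is what prevents the argument from being circular, and requires unwinding the Artinian inductive construction of $\widehat{R^\nu_x}$.
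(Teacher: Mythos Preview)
Your proposal is correct and follows essentially the same route as the paper: the easy direction is handled by the containment $K^i\subseteq K_{\fm}^i$, and the hard direction goes through the universal pseudo-framed deformation ring at the residual representation to exhibit $\rho$ as a base change of $\rho_\nu$. You are in fact more careful than the paper on two points it leaves implicit: you explicitly verify that Lemma~\ref{finitequotient} and Corollary~\ref{cor:sum} go through for $\fm$-adically continuous $\rho$ (needed to make sense of a pseudo-framing on $\rho$ itself, not just on $\overline{\rho}$), and you correctly flag that the argument is non-circular only because the deformation functor in \cite{curtis} is built from Artinian quotients and therefore classifies $\fm$-adically continuous lifts.
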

\begin{proof}
Since $R$ is $\ell$-adically separated, and the $\ell$-adic topology is coarser than the $\fm$-adic topology, the ``only if'' direction is immediate. For the other direction, let $\overline{\rho}$ denote the representation $\rho \otimes_R k$; since $\overline{\rho}$ is $\ell$-adically continuous, it admits a type $\nu$.  Choose a pseudo-framing on
$\overline{\rho}$; this gives rise to a map $R^{\nu} \rightarrow k$, which has kernel a maximal ideal $\mathfrak p$,
such that $\overline{\rho}$ is isomorphic to $\rho^{\nu} \otimes_{R^{\nu}} k$.

Lift the pseudo-framing on $\overline{\rho}$ to a pseudo-framing on $\rho$.  Then we can regard $\rho$ as a
pseudo-framed deformation of $\overline{\rho}$.  On the other hand, the completion of $R^{\nu}$ at $\mathfrak p$
is the universal pseudo-framed deformation of $\overline{\rho}$.  Thus there is a map: 
$$(R^{\nu})_{\mathfrak p} \rightarrow R$$
such that $\rho$ arises by base change from $R^{\nu}$ via the composition:
$$R^{\nu} \rightarrow (R^{\nu})_{\mathfrak p} \rightarrow R.$$
In particular, $\rho$ is $\ell$-adically continuous.
\end{proof}

\section{the Swan conductor}
\label{swan}
In the case of a smooth representation over any field of characteristic different from $p$, the Swan conductor is given by an explicit formula in terms of the lower numbering filtration on $I_F$ (\cite[4.5]{deligne72}, \cite[\S 4]{ulmer}). The connection between the conductor of a representation and the upper numbering filtration was known to Howe in the 1970's: \cite[Thm 3.5]{hen_thesis}. In \cite{katz88}, Katz defines the Swan conductor for continuous representations over complete local Noetherian rings in terms of the upper numbering break decomposition. We extend Katz' definition to the setting of arbitrary Noetherian $W(k)$-algebras.

For any real number $v\geq -1$, let $G_F^v$ denote the upper numbering ramification subgroups of $G_F$ and define $G^v:=W_F\cap G_F^v$. Define $G^{v+}:=\bigcup_{w>v}G^w$. Then $G^0=I_F$ and the closure of $G^{0+}$ is the wild inertia subgroup $P_F$. If $R$ is a Noetherian $W(k)$-algebra and $\rho$ is an $R[W_F]$-module, we say that $\rho$ is \emph{pure of break $v$} if the fixed vectors $\rho^{G^v}$ are zero and $\rho|_{G^{v+}}$ is trivial. 

Let $\rho$ be $\ell$-adically continuous. Letting the subgroup $H$ in Lemma \ref{finitequotient} run over the closures of $G^{v+}$, we have (c.f. \cite[Lemma 1.4]{katz88}) that if $\rho$ is $\ell$-adically continuous, then for $v\in \QQ_{\geq 0}$ there exists a unique subrepresentation $\rho^v$ which is pure of break $v$, and such that $$\rho = \bigoplus_{v\in \QQ_{v\geq 0}}\rho^v,$$ with finitely many nonzero summands. Moreover the functor $\rho\mapsto \rho^v$ is exact and commutes with any base change $R\to R'$. When $R$ is \emph{local}, each $\rho^v$ is projective over $R$, hence free, and we define

\begin{defn}
\label{swandef}
\begin{enumerate}
\item Let $R$ be a Noetherian local $W(k)$-algebra and let $\rho:W_F\to R$ be an $\ell$-adically continuous representation with break decomposition $\bigoplus_{v\in \QQ_{\geq 0}} \rho^v$. Define the \emph{Swan conductor}
$$\Sw_R(\rho) = \sum_{v\in\QQ_{\geq 0}}v\cdot\rk(\rho^v).$$
\item Let $R$ be an arbitrary Noetherian $W(k)$-algebra with connected spectrum, let $\rho$ be an $\ell$-adically continuous representation, and let 
$$\rho = \bigoplus_{\tau}\rho_{\tau} \cong \bigoplus_{\tau} M_{\tau} \otimes {\tilde \tau}$$ 
be the decomposition from Corollary \ref{cor:sum} in terms of irreducible $k$-representations $\tau$ of $P_F$, and let $n_{\tau}$ be the rank of the locally free $R$-module $M_{\tau}$. Define the \emph{Swan conductor} $$\Sw_R(\rho) = \sum_{\tau}n_{\tau}\Sw_k(\tau).$$ 
\end{enumerate}
\end{defn}

In both cases we observe that the Swan conductor depends only on the restriction to $P_F$ (the $v=0$ term vanishes in the first definition).  
\begin{prop}
\label{elladicswan}
\begin{enumerate}
\item If $R$ is a Noetherian local $W(k)$-algebra, then Definitions \ref{swandef}(1) and \ref{swandef}(2) give nonnegative integer values and are equivalent.
\item Let $R$ be a Noetherian $W(k)$-algebra with connected spectrum. Then $\Sw_R(\rho_x)$ is constant as $x$ varies over $\Spec(R)$.
\end{enumerate}
\end{prop}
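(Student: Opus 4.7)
Both parts rest on applying Corollary \ref{cor:sum} with $H=P_F$; this is legitimate because $P_F$ is pro-$p$ with $p\neq\ell$, and it will give a decomposition
\[
\rho|_{P_F} \;\cong\; \bigoplus_{\tau} M_\tau\otimes_{W(k)}\tilde\tau,
\]
indexed by irreducible $k$-representations $\tau$ of $P_F$.

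For part (1), I would first record that each irreducible $\tau$ is pure of a single upper-numbering break $v_\tau\in\QQ_{\geq 0}$ and that classically (Hasse--Arf) $\Sw_k(\tau)=v_\tau\dim_k\tau$ is a nonnegative integer. The key technical step is then to verify that the lift $\tilde\tau$ over $W(k)$ has the same break $v_\tau$: for $v>0$ the group $G^{v+}\subset P_F$ is pro-$p$, so any compact pro-$p$ subgroup of $\ker(GL_n(W(k))\to GL_n(k))$ is trivial, and $\tau|_{G^{v+}}=1$ propagates to $\tilde\tau|_{G^{v+}}=1$; meanwhile the image of $G^v$ under $\tilde\tau$ is finite of prime-to-$\ell$ order, so Maschke realises $\tilde\tau^{G^v}$ as a $W(k)$-direct summand, which must vanish by Nakayama since its reduction mod $\ell$ is $\tau^{G^v}=0$. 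Since $R$ is local, each $M_\tau$ is free of rank $n_\tau$, hence $\rho^v=\bigoplus_{v_\tau=v}M_\tau\otimes\tilde\tau$ has rank $\sum_{v_\tau=v}n_\tau\dim_k\tau$. A regrouping then yields
\[
\sum_v v\cdot\rk(\rho^v) \;=\; \sum_\tau n_\tau v_\tau\dim_k\tau \;=\; \sum_\tau n_\tau\Sw_k(\tau),
\]
which gives simultaneously the equivalence of Definitions \ref{swandef}(1) and \ref{swandef}(2) and the nonnegative-integrality of the common value.

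For part (2), the plan is to compare Definition \ref{swandef}(2) over $R$ with its analogue over the residue field $K(x)$ at a point $x\in\Spec R$. The block decomposition of Corollary \ref{cor:sum} is constructed from central idempotents of the finite group algebra $W(k)[H/H']$, so it is manifestly preserved under the base change $R\to K(x)$; in particular the $\tau$-component of $\rho_x|_{P_F}$ is $(M_\tau\otimes_RK(x))\otimes_{W(k)}\tilde\tau$. Connectedness of $\Spec R$ forces $\dim_{K(x)}(M_\tau\otimes_RK(x))=n_\tau$ for all $x$, so that $\Sw_{K(x)}(\rho_x)=\sum_\tau n_\tau\Sw_k(\tau)=\Sw_R(\rho)$ independently of $x$. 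I expect the main obstacle to be the break-compatibility of the lift $\tilde\tau$ from part (1); once it is established, the rest is routine bookkeeping with the multiplicity spaces $M_\tau$ and the functoriality of Corollary \ref{cor:sum} under base change.
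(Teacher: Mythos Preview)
Your proposal is correct; it differs from the paper's proof mainly in how much is done by hand. The paper disposes of part (1) by citing \cite[Prop.~1.9]{katz88} to identify Definition~\ref{swandef}(1) with the classical (lower-numbering) Swan conductor over complete local rings with finite residue field, then invokes descent to such a field to get that each $\Sw_k(\tau)$ is an integer, and finally observes that Definition~\ref{swandef}(1) is compatible with specialization to $k$ and additive in direct sums. Your route is more explicit: you bypass Katz's result entirely by proving directly that $\tilde\tau$ and $\tau$ have the same break (the Nakayama/Maschke argument you sketch), then regroup the decomposition of Corollary~\ref{cor:sum} by break to identify the two definitions. This buys you a self-contained argument at the cost of reproving what Katz already packages; the paper's citation is shorter but less transparent about \emph{why} the breaks match. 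One small quibble: the integrality of $\Sw_k(\tau)=v_\tau\dim_k\tau$ is really the Swan--Serre integrality theorem (proved via Brauer induction and Hasse--Arf), not Hasse--Arf alone, so you might adjust the attribution.

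For part (2), both you and the paper use the same idea --- that $\rho|_{P_F}$ is constant on a connected spectrum, as recorded right after Corollary~\ref{cor:sum} --- though you spell out the mechanism (idempotents of $W(k)[H/H']$ and local constancy of ranks of $M_\tau$) while the paper states it in one line.
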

\begin{proof}
By \cite[Prop 1.9]{katz88}, when $R$ is a complete local ring with finite residue field, Definition \ref{swandef}(1) agrees with the lower-number definition given in \cite[6.2]{deligne72}, and defines an integer. By descent to the case of a finite residue field, it follows that $\Sw_k(\tau)=\Sw_{W(k)}(\tilde{\tau})=\Sw_{W(k)[1/\ell]}(\tilde{\tau}[1/\ell])$ is an integer. When $R$ is local, $\Sw_R$ from Definition \ref{swandef}(1) is compatible with specialization and additive in direct sums, so we have proved (1). If $\Spec(R)$ is connected, restriction of $\rho$ to $P_F$ is constant, proving (2).
\end{proof}
\begin{rmk}
As an aside, the proof of \cite[Prop 2.11]{kestutis} also proves the following statement. If $R$ is any Noetherian $\ZZ[\frac{1}{p}]$-algebra with connected spectrum, and $\rho$ is an $R[W_F]$-module that factors through a finite quotient of $I_F$, the Swan conductor is constant on $\Spec(R)$.
\end{rmk}
If $\Spec(R)$ is not connected, we define the Swan conductor as a tuple of integers, consisting of the conductors of the projections of $\rho$ onto each connected component. Except for Theorem \ref{universal} and Corollary \ref{ladicmaxadic}, the preceding discussion works verbatim with $W(k)$ replaced by $\ZZ_{\ell}$ (though we will not use this).

\section{Dwork--Langlands--Deligne gamma factors}
\label{fieldsection}
Given any $\ZZ[\frac{1}{p}]$-algebra $R$, we fix a Haar measure $dx$ on $F$ valued in $R$ such that $\int_{\cO_F}dx=1$ (\cite[6.1]{deligne72}). For a topological ring $R$, a representation $r:W_F\to GL_n(R)$ is \emph{smooth} if it is continuous with respect to the discrete topology on $R$. Let $R$ be a commutative local ring in which $p$ is invertible, $\chi$ a smooth character $F^{\times}\to R^{\times}$, and $\psi:F\to W(k)^{\times}$ a smooth additive character. Denote by $n(\psi)$ the smallest integer $m$ such that $\psi|_{\varpi^m\cO_F}$ is trivial. If $c$ is any element of $F$ with valuation $n(\psi)+\Sw(\chi) +1$, we define 
\begin{equation}
\label{epsilonforcharacters}
\epsilon(\chi,\psi) = \left\{
\begin{array}{ccl}
1 & \text{if} &\chi\text{ is unramified and } n(\psi)=0\\
\int_{c^{-1}\cO_F^{\times}}\chi^{-1}(x)\psi(x)dx &\text{if} & \chi\text{ is ramified }
\end{array}\right.
\end{equation}
and $\epsilon(\chi, \psi(ax)) = \chi(a)q^{-v(a)}\epsilon(\chi,\psi)$ in general.

\begin{thm}[\cite{deligne72}, Th\'{e}or\`{e}me 4.1]
\label{fieldsepsilon}
Let $\kappa$ be a field of characteristic zero, let $r:W_F\rightarrow GL_n(\kappa)$ be a smooth representation, let $\psi:W_F\rightarrow \kappa^{\times}$ be a smooth character. There is a unique element $\epsilon(r,\psi)$ in $\kappa^{\times}$, depending only the isomorphism class of $r$, satisfying the following conditions:
\begin{enumerate}
\item For any exact sequence $0\to r'\to r\to r''\to 0$, we have $\epsilon(r,\psi)=\epsilon(r',\psi)\epsilon(r'',\psi)$.
\item If $E/F$ is a finite separable extension, and $r_E$ is a virtual representation of $W_E$ with virtual degree zero, then $\epsilon(\Ind_{E/F}r_E,\psi)=\epsilon(r_E,\psi\circ \tr_{E/F})$.
\item If $\dim r=1$ then $\epsilon(r,\psi)$ coincides with Equation (\ref{epsilonforcharacters}).
\end{enumerate}
\end{thm}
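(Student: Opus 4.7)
The plan is to establish uniqueness and existence of $\epsilon(r,\psi)$ separately, following Deligne's original global strategy.

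For uniqueness, the key tool is Brauer induction. A smooth $r:W_F\to GL_n(\kappa)$ is trivial on a finite-index open subgroup of $I_F$, so its restriction to inertia factors through a finite group; after accounting for the $\mathbb{Z}$-quotient generated by a Frobenius lift, one may view $r$ as a virtual representation of a group to which Brauer's theorem (in its virtual form on $W_F$) applies. This writes $r = \sum_i n_i \Ind_{E_i/F}\chi_i$ with $\chi_i$ smooth characters of $W_{E_i}$. Property (3) prescribes each $\epsilon(\chi_i,\psi\circ\tr_{E_i/F})$. Property (2), applied after the standard device of subtracting a suitable multiple of $\mathbf{1}_F$ (or $\Ind_{E_i/F}\mathbf{1}_{E_i}$) to reach virtual degree zero, transfers these to the values $\epsilon(\Ind_{E_i/F}\chi_i,\psi)$. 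Property (1), extended multiplicatively to the virtual ring, then assembles the pieces into $\epsilon(r,\psi)$. That the result is independent of the Brauer expression follows because any relation between two expressions translates, via (1) and (2), into the corresponding multiplicative relation among the already-fixed epsilon factors.

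For existence, I would adopt Deligne's global approach. Realize $F$ as the completion at a place $v_0$ of a global field $K$, and globalize $(r,\psi)$ to a Galois representation $R$ of $G_K$ and an additive character $\Psi$ of $\mathbb{A}_K/K$ whose component at $v_0$ recovers $(r,\psi)$ and whose components elsewhere are either unramified or tamely ramified (so that their local epsilon factors are already known by direct computation). Invoke the functional equation of the Artin $L$-function
$$\Lambda(R,s) = \epsilon(R,s)\Lambda(R^\vee, 1-s),$$
together with the factorization $\epsilon(R,s) = \prod_v \epsilon_v(R_v,\psi_v,s)$. This functional equation is known for $1$-dimensional $R$ by Tate's thesis, and is propagated to arbitrary $R$ via Brauer induction combined with the Artin formalism for $L$- and $\epsilon$-factors. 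Solving the global product formula for the factor at $v_0$ then \emph{defines} $\epsilon(r,\psi)$.

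The main obstacle is well-definedness: the value of $\epsilon(r,\psi)$ extracted this way must not depend on the choice of globalization $(R,\Psi)$. Two such globalizations differ at auxiliary places by local representations whose epsilon factors are already pinned down, so the ratio of their global product formulas must equal $1$, forcing agreement at $v_0$; the existence of sufficiently flexible globalizations requires weak approximation and Grunwald--Wang-type density results for extensions of $K$ with prescribed local behavior at $v_0$. Once well-definedness is established, properties (1), (2), and (3) follow, respectively, from the multiplicativity of Artin $L$-functions in exact sequences, the Brauer--Artin compatibility of induction with $L$- and $\epsilon$-factor formation in virtual degree zero, and direct comparison with Tate's formula in the $1$-dimensional case. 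The whole argument is subtle precisely because it is irreducibly global: there is no known purely local construction of $\epsilon(r,\psi)$ in the non-abelian case of characteristic zero residue fields without invoking a globalization of this kind (though in positive characteristic, Laumon's Fourier--Deligne transform provides an alternative).
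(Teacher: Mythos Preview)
The paper does not give a proof of this theorem; it is quoted as \cite[Th\'{e}or\`{e}me 4.1]{deligne72} and used as input. So there is no paper's proof to compare against. Your sketch is a fair summary of Deligne's original argument, but the existence half is mis-ordered in a way that matters.

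You write ``globalize $(r,\psi)$ to a Galois representation $R$ of $G_K$'' and then solve the global functional equation for the factor at $v_0$. An arbitrary smooth $n$-dimensional local representation $r$ need not extend to a global Galois representation, so this step as stated does not go through. Deligne's actual logic is the reverse: Brauer induction already \emph{defines} a candidate $\epsilon(r,\psi)$ as a product of abelian epsilon factors over any Brauer expression $r=\sum_i n_i\,\Ind_{E_i/F}\chi_i$, and the global argument is used only to show this product is independent of the expression chosen. Concretely, one must show that whenever $\sum_i n_i\,\Ind_{E_i/F}\chi_i = 0$ virtually, the corresponding product of local abelian constants equals $1$. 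It is the \emph{characters} $\chi_i$ that get globalized (via Grunwald--Wang/weak approximation), not $r$ itself; the resulting global virtual representation is zero, its global $\epsilon$ is $1$, and factoring over places yields the needed local identity (the auxiliary places being controlled because the data there are unramified or otherwise already understood). Your ingredients are all present, but you should restructure the existence argument so that Brauer induction comes first and the globalization is applied to the characters appearing in a Brauer relation, not to $r$.
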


The \emph{Artin conductor} $a(r)$ is defined in \cite[4.5]{deligne72} (or \cite{ulmer}). The monomial $\epsilon(r,X,\psi)$ is defined by twisting $r$ by the unramified character $\Fr\mapsto X:W_F\to \kappa[X,X^{-1}]^{\times}$, the transformation formula \cite[5.5.1]{deligne72} then writes it as $\epsilon(r,\psi)X^{a(r)+\dim(r)n(\psi)}$.

The epsilon factor $\epsilon(r,X,\psi)$ is not compatible with specialization. However, Deligne's modified epsilon factor is. 
\begin{equation}
\label{modifiedepsilondef}
\epsilon_0(r,\psi) = \epsilon(r,\psi)\det\left(-\Fr\vL{r^I}\right).
\end{equation}

\begin{thm}[\cite{deligne72} Th\'{e}or\`{e}me 6.5]
\label{modifiedepsilonfactor}
Let $\kappa$ be a field of characteristic different from $p$, let $r:W_F\rightarrow GL_n(\kappa)$ be a smooth representation, and let $\psi:W_F\rightarrow \kappa^{\times}$ be a smooth character. There exists a unique element $\epsilon_0(r,\psi)$ in $\kappa^{\times}$ satisfying the following conditions:
\begin{enumerate}
\item $\epsilon_0(r,\psi)$ is invariant under extension of the field $\kappa$.
\item For any exact sequence $0\to r'\to r\to r''\to 0$, we have $\epsilon_0(r,\psi)=\epsilon_0(r',\psi)\epsilon_0(r'',\psi)$.
\item If $E/F$ is a finite separable extension, and $r_E$ is a virtual representation of $W_E$ with virtual degree zero, then $\epsilon_0(\Ind_{E/F}r_E,\psi)=\epsilon_0(r_E,\psi\circ \tr_{E/F})$.
\item Let $\cO$ be a discrete valuation ring with residue characteristic different from $p$, fraction field $\kappa$, and residue field $\cO/\fm$. If $r$ and $\psi$ are defined over $\cO$, then $\epsilon_0(r,\psi)$ lies in $\cO^{\times}$ and satisfies $\epsilon_0(r,\psi)\equiv \epsilon_0(\overline{r},\overline{\psi})\mod \fm$ where $\overline{r}$ and $\overline{\psi}$ are the reductions of $r$ and $\psi \mod \fm$.
\item If $r$ is a character, then $\epsilon_0(r,\psi)$ coincides with Equation (\ref{modifiedepsilondef}).
\end{enumerate}
\end{thm}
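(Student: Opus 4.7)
The plan is to follow the classical strategy of proving uniqueness by Brauer induction and establishing existence by first working in characteristic zero and then descending via property (4). For uniqueness, Brauer's induction theorem expresses every virtual smooth representation of $W_F$ over $\kappa$ (after extending scalars to $\overline{\kappa}$, which is harmless by (1)) as a $\QQ$-linear combination of inductions $\Ind_{E/F}\chi$ of one-dimensional characters from open subgroups. After clearing denominators using additivity (2), inductivity (3) reduces the computation of $\epsilon_0(r,\psi)$ to inductions of characters, on which the values are prescribed by (5). Hence any $\epsilon_0$ satisfying (1)--(5) is unique.

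For existence in characteristic zero I would set $\epsilon_0(r,\psi) := \epsilon(r,\psi) \cdot \det(-\Fr \mid r^{I_F})$, where $\epsilon(r,\psi)$ comes from Theorem \ref{fieldsepsilon}. In characteristic zero the image of $I_F$ under a smooth representation is finite and acts semisimply, so $r \mapsto r^{I_F}$ is exact on short exact sequences and compatible with induction of virtual degree-zero representations. Combined with the corresponding properties of $\epsilon$, this gives (2), (3), and (5) in char.\ zero, while (1) is inherited directly from Theorem \ref{fieldsepsilon}.

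The heart of the proof is (4): for $r$ defined over a DVR $\cO$ of residue characteristic different from $p$, one must show $\epsilon_0(r,\psi) \in \cO^\times$ and that it reduces correctly modulo $\fm$. Since both statements are additive in exact sequences and compatible with induction of virtual degree-zero representations, Brauer induction again reduces to the case $r = \chi$ a character. If $\chi$ is unramified then $\epsilon_0(\chi,\psi)$ is a monomial in $\chi(\Fr)$ and $q^{-1}$, manifestly a unit of $\cO$ with the expected reduction. If $\chi$ is ramified then $\chi^{I_F} = 0$, so $\epsilon_0(\chi,\psi) = \epsilon(\chi,\psi)$ is the local Gauss sum of (\ref{epsilonforcharacters}); this is a unit of $\cO$ because the integrand is a finite sum of Teichm\"uller lifts of elements of a finite cyclic group whose order is prime to the residue characteristic, and the reduction of this Gauss sum mod $\fm$ is the Gauss sum of $\overline{\chi}$ against $\overline{\psi}$.

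Finally, to define $\epsilon_0$ over an arbitrary field $\kappa$ of characteristic $\ell \neq p$, I would lift the input data to a DVR with residue field (a finite extension of) $\kappa$ and apply (4) to descend. Once more Brauer induction reduces to characters, which always lift: unramified characters lift by choosing any preimage of $\chi(\Fr)$, while ramified characters have finite image of order prime to $\ell$ (since $\mu_\ell(\kappa) = 1$) and lift by Teichm\"uller. Independence of the chosen lift follows from (4) applied to two different lifts sharing the same reduction. The main obstacle I expect is the bookkeeping inside the Brauer induction: condition (3) is only stated for virtual degree-zero representations, so one must pad with copies of the trivial representation, then clear the rational denominators in Brauer's formula while preserving additivity and inductivity, and finally check that reduction mod $\fm$ commutes with all of these auxiliary choices. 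The single concrete computation --- the Gauss sum identity at the character level --- is classical, but threading it correctly through the induction is the delicate step.
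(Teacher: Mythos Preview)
The paper does not give its own proof of this statement: it is quoted verbatim from Deligne's 1972 paper (Th\'eor\`eme 6.5 there) and used as a black box. So there is nothing in the present paper to compare against.

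Your outline nonetheless follows the same architecture as Deligne's original argument --- Brauer induction to reduce to characters, the explicit definition $\epsilon_0 = \epsilon \cdot \det(-\Fr\mid r^{I_F})$ in characteristic zero, and descent through a DVR --- but you pass over the one genuinely delicate step. In the verification of (4) at the character level, a $\chi$ that is ramified over $\cO$ can become \emph{unramified} modulo $\fm$; this happens precisely when $\chi|_{\cO_F^\times}$ takes values in $1+\fm$. In that situation $\epsilon_0(\chi,\psi)$ is a Gauss sum, whereas $\epsilon_0(\overline{\chi},\overline{\psi}) = \epsilon(\overline{\chi},\overline{\psi})\cdot(-\overline{\chi}(\Fr))$ is \emph{not} a Gauss sum, since $\overline{\chi}$ is now unramified. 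Verifying that the former reduces to the latter is the actual content of Deligne's computation, and is exactly why the correction factor $\det(-\Fr\mid r^{I_F})$ is needed. Your sentence ``the reduction of this Gauss sum mod $\fm$ is the Gauss sum of $\overline{\chi}$ against $\overline{\psi}$'' is false in this case and misses the heart of the matter.

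A smaller issue: your justification that the Gauss sum lies in $\cO^\times$ (``a finite sum of Teichm\"uller lifts of elements of a finite cyclic group whose order is prime to the residue characteristic'') at best shows the sum lies in $\cO$, not that it is a unit --- a sum of units need not be a unit, and there is no reason the values of $\chi$ should be Teichm\"uller lifts in the first place. The standard argument instead uses the functional equation expressing $\epsilon(\chi,\psi)\epsilon(\chi^{-1},\psi(-\,\cdot\,))$ as an explicit unit of $\cO$ (a root of unity times a power of $q$), which forces each factor to be a unit.
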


We define the L-factor and gamma factor as
\begin{align*}
L(r,X):=\det\left(1-\Fr X\vL{r^{I_F}}\right)^{-1}\\ 
\gamma(r,X,\psi) := \epsilon(r,X,\psi)\frac{L(r^{\vee},\frac{1}{qX})}{L(r,X)},
\end{align*}
where $r^{\vee}$ denotes the contragredient.

We extend these definitions to the setting of Weil-Deligne representations, or equivalently, $\ell$-adic representations. First recall the necessary definitions.

If $\kappa$ is a field of characteristic different from $p$, a Weil-Deligne representation is a pair $(r,N)$ consisting of a smooth representation $r:W_F\to GL_n(\kappa)$ and a nilpotent endomorphism $N$ of $r$ such that $wNw^{-1}=|w|N$ where $|\cdot |$ denotes the power of $\Fr$ occurring in $w$. The Frobenius-semisimplification of a Weil-Deligne representation $(r,N)$ is the representation $(r^{ss},N)$ where $r^{ss}(w)$ denotes the semisimple part of $r(w)$. The dual Weil-Deligne representation is $(r^{\vee},N^{\vee})$ where $N^{\vee}\in\End_{\kappa}(r)$ is the operator such that $(N^{\vee}\phi)(v)=\phi(-Nv)$ for $\phi\in r^{\vee},v\in r$. 

\begin{defn}
\label{elladicrepdef}
Suppose $R$ is the fraction field of a complete local Noetherian domain $R_0$ of characteristic zero with residue field $k$ and maximal ideal $\fm$. We say $\rho:W_F\to GL_n(R)$ is an \emph{$\ell$-adic representation over $R$} to mean it is isomorphic to $\rho_0\otimes_{R_0}R$ for some $\rho_0:W_F\to GL_n(R_0)$ that is continuous with respect to the $\fm$-adic topology on $R_0$ (hence $\rho$ is $\ell$-adically continuous in the sense of Definition \ref{ladiccontinuity}). 
\end{defn}
There is an equivalence (in fact an isomorphism) of categories between $\ell$-adic representations over $R$ and Weil-Deligne representations over $R$ \cite[4.1.6 Prop]{eh}. Note that semisimple $\ell$-adic representations are smooth \cite[(4.2.3) Corollary]{tate_ntb}. 

Let $(r,N)$ be a Weil-Deligne representation over a field $\kappa$ of characteristic $0$, and let $r_N=\ker(N)$. If, in addition, $\kappa$ is the fraction field of a complete local Noetherian domain of characteristic zero with residue field $k$, let $\rho$ be the $\ell$-adic representation over $\kappa$ corresponding to $(r,N)$. Following \cite[8.12]{deligne72} and \cite[4.1.6]{tate_ntb} we set

\begin{align}
L((r,N),X) &= L(r_N,X)=L(\rho,X)\\
\epsilon((r,N),X,\psi)& = \epsilon(r,X,\psi)\det(-\Fr X\vL r^I/r_N^I)=\epsilon(\rho,X,\psi)\\
\epsilon_0((r,N),\psi) &=\epsilon_0(r,\psi)=\epsilon_0(\rho,\psi)\\
\label{gammadefnincharzero}\gamma((r,N),X,\psi)&=\epsilon((r,N),X,\psi)\frac{L((r^{\vee},N^{\vee}),\frac{1}{qX})}{L((r,N),X)} = \gamma(\rho,X,\psi)
\end{align}

\begin{lemma}
\label{extendingtoweildeligne}
Let $(r,N)$ be a Frobenius semisimple Weil-Deligne representation. Then
$$\det\left(-\Fr X\vL {r^I/r_N^I}\right) = \frac{L(r^{\vee}, \frac{1}{qX})}{L(r,X)}\frac{L((r,N),X)}{L((r^{\vee},N^{\vee}),\frac{1}{qX})}.$$
\end{lemma}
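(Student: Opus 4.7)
The plan is to expand both sides as characteristic polynomials on finite-dimensional $\Fr$-modules and reconcile them using the Weil--Deligne relation $\Fr N \Fr^{-1}=q^{-1}N$. Write $V := r^I/r_N^I$ and $W := (r^\vee)^I/((r^\vee)_{N^\vee})^I$; the Weil--Deligne relation implies $\Fr$ preserves $\ker N$, so $r_N^I$ is $\Fr$-stable in $r^I$ and both $V$ and $W$ inherit $\Fr$-actions. Expanding all four $L$-factors and using $\det(1-\Fr X \vL r^I)/\det(1-\Fr X \vL r_N^I)=\det(1-\Fr X \vL V)$ (and similarly on the dual side), the right-hand side simplifies to $\det(1-\Fr X \vL V)/\det(1-\tfrac{\Fr}{qX} \vL W)$.

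To compare these, I would identify $W$ concretely. Working in characteristic zero, with $I_F$ acting on $r$ through a finite quotient, the functor of $I_F$-invariants is exact. Dualizing the tautological sequence $0 \to N(r) \to r \to r/N(r) \to 0$, observing that $(r^\vee)_{N^\vee}=\ker(N^\vee)$ is canonically $(r/N(r))^\vee$, and taking $I_F$-invariants produces $W \cong (N(r)^I)^\vee$. Next, applying the same exactness to $0 \to r_N \to r \xrightarrow{N} N(r) \to 0$ yields an isomorphism $N(r)^I \cong V$ via $[v] \mapsto Nv$. This map is not $\Fr$-equivariant: the relation $N\Fr=q\Fr N$ forces $\Fr$ on $N(r)^I$ to correspond to $q^{-1}\Fr$ on $V$, so the $\Fr$-eigenvalues on $W$ are $q/\alpha_i$, where $\alpha_1,\dots,\alpha_d$ are the $\Fr$-eigenvalues on $V$.

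A direct computation then gives
\[\det\bigl(1 - \tfrac{\Fr}{qX} \vL W\bigr) = \prod_{i=1}^d \Bigl(1 - \tfrac{1}{\alpha_i X}\Bigr) = \frac{(-1)^d\det(1-\Fr X \vL V)}{X^d\det(\Fr \vL V)},\]
and substituting back reduces the right-hand side to $(-X)^d\det(\Fr \vL V) = \det(-\Fr X \vL V)$, which matches the left. The main obstacle is the careful bookkeeping of the Tate twist: the factor $q^{-1}$ from $\Fr N\Fr^{-1}=q^{-1}N$ has to precisely cancel the twist $X\mapsto \tfrac{1}{qX}$ built into the dual $L$-factor, and one must track signs and exponents through the dual side. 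Frobenius semisimplicity of $(r,N)$ appears as a hypothesis but is not actually used for the computation, which is a purely formal identity of characteristic polynomials.
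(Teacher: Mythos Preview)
Your argument is correct and takes a genuinely different route from the paper. The paper first reduces (as you do) to the identity
\[
\det\Bigl(I-\tfrac{r^{\vee}(\Fr)}{qX}\ \Big|\ W\Bigr)=\det\Bigl(I-\tfrac{r(\Fr)^{-1}}{X}\ \Big|\ V\Bigr),
\]
but then invokes the classification of Frobenius-semisimple indecomposable Weil--Deligne representations: every such object has the form $r'\otimes\Sp(n)$ with $r'$ irreducible smooth, and on these the identity is checked by hand. Your approach instead proves the eigenvalue matching directly via duality and the Weil--Deligne relation, using that $N$ induces a $\Fr$-twisted isomorphism $V\cong N(r)^I$ and that $W\cong (N(r)^I)^{\vee}$.

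What each approach buys: the paper's argument is short once one accepts the structure theory, but it genuinely uses Frobenius semisimplicity (to get the $r'\otimes\Sp(n)$ decomposition). Your argument is intrinsic and, as you correctly observe, does not use Frobenius semisimplicity at all---the identity is a formal consequence of the relation $\Fr N\Fr^{-1}=q^{-1}N$ and the exactness of $(\,\cdot\,)^{I_F}$ in characteristic zero. One small point you leave implicit: the identification $(N(r)^{\vee})^{I}\cong (N(r)^{I})^{\vee}$ (invariants of the dual versus dual of the invariants) goes through coinvariants and the averaging projector, and you should note that this identification is $\Fr$-equivariant because $\Fr$ normalizes $I_F$ and hence commutes with the projector. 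With that remark added, your proof is complete and slightly sharper than the paper's.
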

\begin{proof}
Since $L(r,X)=\det\left(I-r(\Fr)X \vL {r^I/r_N^I}\right)^{-1}\det\left(I-r(\Fr)X\vL r_N^I\right)^{-1}$, and similarly for $L(r^{\vee}, \frac{1}{qX})$, the Lemma can be reduced to proving that $$\det\left(I-r^{\vee}(\Fr)(qX)^{-1}\vL {(r^{\vee})^I/(r^{\vee}_{N^{\vee}})^I}\right) = \det\left(I-r(\Fr)^{-1}X^{-1}\vL {r^I/r_N^I}\right).$$ Since both sides are multiplicative in direct sums, it suffices to prove the equality for indecomposable Weil-Deligne representations, or in other words representations of the form $r'\otimes \Sp(n)$ for $r'$ an irreducible smooth representation of $W_F$ (see \cite[4.1.4]{tate_ntb} for the definition). In this case, the equality can be checked by direct computation.
\end{proof}

\begin{cor}
\label{gammasemisimplecharzero}
\begin{enumerate}
\item Given a Weil-Deligne representation $(r,N)$, $$\gamma((r,N),X,\psi)=\gamma(r,X,\psi).$$
\item Given an exact sequence $0\rightarrow V'\rightarrow V\rightarrow V''\rightarrow 0$ of Weil-Deligne representations (equivalently, $\ell$-adic representations), then $$\gamma(V,X,\psi)=\gamma(V',X,\psi)\gamma(V'',X,\psi).$$
\item Let $\rho$ be an $\ell$-adic representation of $W_F$, let $\rho^{ss}$ be its semisimplification, let $(r,N)$ be its associated Weil-Deligne representation, and let $(r^{ss},N)$ be the Frobenius-semisimplification of $(r,N)$. Then
$$\gamma(\rho,X,\psi)=\gamma(\rho^{ss},X,\psi) =\gamma(r^{ss},X,\psi).$$
\end{enumerate}
\end{cor}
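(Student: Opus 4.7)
For part (1), the plan is to place the two gamma factors side by side and take their ratio. Using the definition \eqref{gammadefnincharzero} together with the definition of $\gamma(r,X,\psi)$ for smooth representations, the ratio simplifies to
$$
\frac{\gamma((r,N),X,\psi)}{\gamma(r,X,\psi)} = \det(-\Fr X\vL r^I/r_N^I)\cdot\frac{L((r^\vee,N^\vee),\tfrac{1}{qX})\,L(r,X)}{L((r,N),X)\,L(r^\vee,\tfrac{1}{qX})},
$$
and the content of Lemma \ref{extendingtoweildeligne} is precisely that this quantity equals $1$ in the Frobenius-semisimple case. To upgrade to arbitrary $(r,N)$, I would observe that each factor on the right only sees characteristic polynomials of $\Fr$ on $I_F$-invariant subspaces of $r$, $r_N$, and their duals; by Jordan decomposition these characteristic polynomials are unchanged when $r(\Fr)$ is replaced by its semisimple part.

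For part (2), I would transport the statement through the exact equivalence with Weil--Deligne representations and assume a short exact sequence $0\to(r',N')\to(r,N)\to(r'',N'')\to 0$. By part (1) it suffices to verify the identity $\gamma(r,X,\psi)=\gamma(r',X,\psi)\gamma(r'',X,\psi)$ on the underlying smooth representations. I would combine three standard multiplicativity facts: Theorem \ref{fieldsepsilon}(1) for $\epsilon(r,\psi)$; additivity of the Artin conductor and of the dimension in short exact sequences; and exactness of the $I_F$-invariants functor on smooth representations in characteristic zero (since $I_F$ acts through a finite quotient), which yields multiplicativity of $L(r,X)$ and $L(r^\vee,\tfrac{1}{qX})$.

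For part (3), my plan is first to identify the Weil--Deligne representation underlying $\rho^{ss}$ as $(r^{ss},0)$. The key point is that an irreducible $\ell$-adic representation has $N=0$: otherwise $\ker N$ would be a proper nonzero $W_F$-stable subspace, since the relation $wNw^{-1}=|w|N$ forces $W_F$ to stabilize $\ker N$. So every composition factor of $\rho$ has vanishing monodromy, and $\rho^{ss}$ corresponds to $(r^{ss},0)$, where the Frobenius-semisimplification of $r$ agrees with the semisimplification of $r$ as a smooth $W_F$-representation. Plugging $N=0$ into \eqref{gammadefnincharzero} collapses the correction factor $\det(-\Fr X\vL r^I/r_N^I)$ to $1$ and gives $\gamma((r^{ss},0),X,\psi)=\gamma(r^{ss},X,\psi)$, so $\gamma(\rho^{ss},X,\psi)=\gamma(r^{ss},X,\psi)$. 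Combining part (1) with the smooth-representation multiplicativity of part (2) applied to a composition series of $r$ then yields $\gamma(\rho,X,\psi)=\gamma((r,N),X,\psi)=\gamma(r,X,\psi)=\gamma(r^{ss},X,\psi)$.

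The only real obstacle is the Frobenius-semisimplification invariance needed to upgrade Lemma \ref{extendingtoweildeligne} in part (1); the remaining steps are routine bookkeeping around standard multiplicativity properties of $\epsilon$ and $L$.
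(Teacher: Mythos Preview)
Your proposal is correct and follows essentially the same route as the paper. The paper's proof is extremely terse---three sentences---but unpacks to exactly what you wrote: for (1), invoke invariance of $L$, $\epsilon$, $\gamma$ under Frobenius-semisimplification and then apply Lemma~\ref{extendingtoweildeligne}; for (2), reduce via (1) to the smooth case and use multiplicativity of $\epsilon$ and $L$; for (3), deduce from (2) via a composition series.

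One minor remark on part (3): your identification of the Weil--Deligne representation of $\rho^{ss}$ with $(r^{ss},0)$ implicitly uses that the Frobenius-semisimplification of a smooth $W_F$-representation in characteristic zero coincides with its representation-theoretic semisimplification (equivalently, that $r(\Fr)$ semisimple forces $r$ semisimple). This is true but not entirely trivial; the paper sidesteps it by deriving both equalities $\gamma(\rho)=\gamma(\rho^{ss})$ and $\gamma(\rho)=\gamma(r^{ss})$ separately from (1) and (2), without ever needing to match $\rho^{ss}$ with $(r^{ss},0)$. Your final chain $\gamma(\rho)=\gamma((r,N))=\gamma(r)=\gamma(r^{ss})$ already does this, so the identification is an embellishment rather than a load-bearing step.
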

\begin{proof}
Each of the constants $L$, $\epsilon$, and $\gamma$ remains the same if we replace $(r,N)$ by its Frobenius semisimplification, so Lemma \ref{extendingtoweildeligne} proves the first claim. The second claim follows immediately from the first, and the third follows from the second.
\end{proof}

Let $\rho$ be an $\ell$-adic representation over $\kappa$. From $\epsilon(\rho,\psi)$ we define a monomial $\epsilon(\rho,X,\psi)\in \kappa[X,X^{-1}]$ by twisting by the $\kappa[X,X^{-1}]^{\times}$-valued unramified character $\Fr\mapsto X$, and similarly for $\epsilon_0$:

\begin{align}
\label{epsilonwithX}
\epsilon(\rho,X,\psi)&:=\epsilon(\rho,\psi)X^{a(\rho) + (\dim\rho)n(\psi)}\\
\label{modifiedepsilonwithX}
\epsilon_0(\rho,X,\psi)&:=\epsilon_0(\rho,\psi)X^{\Sw(\rho)+(\dim\rho)(n(\psi)+1)}
\end{align}

It follows that $\epsilon(\rho,X,\psi)=\epsilon_0(\rho,X,\psi)\det\left(-\Fr X\vL {\rho^I}\right)^{-1}$ and hence 
\begin{equation}
\label{gammaintermsofmodifiedepsilon}
\gamma(\rho,X,\psi) = \epsilon_0(\rho,X,\psi)\det\left(-\Fr X\vL {\rho^I}\right)^{-1}\frac{L(\rho^{\vee},\frac{1}{qX})}{L(\rho,X)}.
\end{equation}

\section{Interpolating $\epsilon_0(\rho,X,\psi)$}
\label{epsilon0}
In this section we extend a theorem of Yasuda to the setting of Section \ref{arbitrary}.
\begin{thm}[\cite{yasuda}]
\label{yasuda}
For $R$ a complete Noetherian local ring with residue field $k$, let $\rho:W_F\to GL_n(R)$ be a continuous representation, and let $\psi:F\rightarrow R^{\times}$ be a smooth character. Then there exists an element $\epsilon_0(\rho,\psi)$ in $R^{\times}$, depending only on the isomorphism class of $\rho$, satisfying:
\begin{enumerate}
\item If $f:R\to R'$ is a local ring homomorphism, then $$f(\epsilon_0(\rho,\psi))=\epsilon_0(\rho\otimes_RR',\psi).$$
\item Given an exact sequence $0\to \rho'\to\rho\to\rho''\to 0$, we have $\epsilon_0(\rho,\psi)=\epsilon_0(\rho',\psi)\epsilon_0(\rho'',\psi)$.
\item If $R$ is a field, $\epsilon_0(\rho,\psi)$ equals the modified epsilon factor of Theorem \ref{modifiedepsilonfactor}.
\end{enumerate}
Moreover, the map $(R,\rho,\psi) \mapsto \epsilon_0(\rho,\psi)$ is uniquely determined by the above properties.
\end{thm}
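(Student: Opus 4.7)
My plan is to reduce uniqueness to a density-of-points argument on the completed universal pseudo-framed deformation ring given by Theorem \ref{universal}, and to construct $\epsilon_0$ by extending Laumon's geometric construction (via the Fourier--Deligne transform) to the relative setting. The geometric approach has the advantage that integrality of $\epsilon_0$ in $R^\times$ comes essentially for free.

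\textbf{Uniqueness.} Suppose $\epsilon_0, \epsilon_0'$ both satisfy (1)--(3). By Corollary \ref{ladicmaxadic}, the given continuous $\rho$ is $\ell$-adically continuous; choose a pseudo-framing and apply the second part of Theorem \ref{universal} to obtain a unique local homomorphism $\widehat{R^\nu_\fp} \to R$, where $\widehat{R^\nu_\fp}$ is the completion of $R^\nu$ at the maximal ideal $\fp$ corresponding to the mod-$\fm$ reduction $\bar\rho$, and $\rho$ is the base change of the universal pseudo-framed deformation. Property (1) reduces uniqueness to the case $R = \widehat{R^\nu_\fp}$. This ring is reduced (using excellence of the finite-type $W(k)$-algebra $R^\nu$) and $\ell$-torsion-free, so continuous $\overline{\QQ_\ell}$-specializations are dense in the following sense: an element of $\widehat{R^\nu_\fp}^\times$ is determined by its images under all continuous maps $\widehat{R^\nu_\fp} \to \overline{\QQ_\ell}$. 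At each such specialization the residue field has characteristic zero, and property (3) forces $\epsilon_0$ to equal Deligne's classical constant from Theorem \ref{modifiedepsilonfactor}, pinning down $\epsilon_0(\rho, \psi)$. Framing-independence is automatic because $\epsilon_0$ only depends on the isomorphism class of $\rho$, hence is $G^\nu$-invariant and descends.

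\textbf{Existence.} Existence is established by generalizing Laumon's geometric construction of $\epsilon_0$ from the finite-field setting to the relative setting over Noetherian $W(k)$-algebras. One associates to $\rho$ an $\ell$-adic sheaf on a suitable model of $\Spec F$, combines it with the Artin--Schreier sheaf associated to $\psi$ on $\mathbb{A}^1_R$, and takes a canonical stalk of the Fourier--Deligne transform of the resulting sheaf; the trace of Frobenius on that stalk yields $\epsilon_0(\rho, \psi) \in R^\times$. Base change compatibility (1) and additivity in short exact sequences (2) are built into the construction; agreement with Deligne's classical constant on fields (3) is Laumon's theorem. The hypothesis $(R^\times)^p = R^\times$ in Yasuda's framework is automatic in our setting: $p \in R^\times$ because the residue field $k$ has characteristic $\ell \neq p$, and Hensel's lemma applied to the separable polynomial $T^p - u$ produces a $p$-th root for every $u \in R^\times$, using that $k$ is algebraically closed.

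\textbf{Main obstacle.} The principal technical difficulty lies in the Fourier--Deligne setup itself: constructing the relative Fourier--Deligne transform over arbitrary Noetherian $W(k)$-algebras (not just over finite fields, as in Laumon) and verifying that the designated stalk is genuinely a unit of $R$ rather than only of some localization is what makes this construction delicate. This extension of Laumon's framework, together with the verification that the resulting stalk satisfies the expected compatibilities with induction and base change, is the substantive content of Yasuda's work and the step where one cannot simply appeal to formal arguments on the universal ring.
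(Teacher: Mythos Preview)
The paper does not prove this theorem; it is quoted from \cite{yasuda} as external input, with no argument supplied. The paper's own contribution in Section~\ref{epsilon0} begins at Proposition~\ref{epsilon_0reducedelltorsionfree}, which \emph{uses} Theorem~\ref{yasuda} to extend $\epsilon_0$ from complete local rings to arbitrary Noetherian $W(k)$-algebras. So there is no proof in the paper to compare your proposal against.

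For what it is worth, your existence sketch accurately summarizes Yasuda's method (Laumon's Fourier--Deligne construction, transported to complete local $W(k)$-algebras), and your observation that the hypothesis $(R^\times)^p = R^\times$ is automatic here via Hensel's lemma is correct. Your uniqueness argument, however, has a gap relative to the theorem \emph{as stated}: after reducing to $R = \widehat{R^\nu_\fp}$ you invoke characteristic-zero specializations, but a map $\widehat{R^\nu_\fp} \to \kappa$ with $\kappa$ a field of characteristic zero is not a local ring homomorphism between complete local rings with residue field $k$ (indeed $\kappa$ is not even in the class of rings for which the theorem defines $\epsilon_0$), so property~(1) does not cover it and property~(3) only speaks to the case $R=k$. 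To make your density argument go through you would need the stronger, non-local base-change statement that appears later as Proposition~\ref{epsilon_0reducedelltorsionfree}(1); as written, properties (1)--(3) alone do not obviously let you access the characteristic-zero fibers.
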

In this setting we define the monomial $\epsilon_0(\rho,X,\psi)$ as in Equation \ref{modifiedepsilonwithX}.

\begin{prop}
\label{epsilon_0reducedelltorsionfree}
Let $R$ be a Noetherian $W(k)$-algebra and let $\rho:W_F\to GL_n(R)$ be an $\ell$-adically continuous representation of $W_F$. Then there exists an element $\epsilon_0(\rho,\psi)\in R^{\times}$, depending only on the isomorphism class of $\rho$ such that:
\begin{enumerate}
\item If $f: R \rightarrow R'$ is a map of Noetherian $W(k)$-algebras, then
$$f(\epsilon_0(\rho,\psi)) = \epsilon_0(\rho \otimes_R R',\psi).$$
\item Given an exact sequence $0 \to \rho' \to \rho \to \rho'' \to 0$, we have
$\epsilon_0(\rho,\psi) = \epsilon_0(\rho',\psi)\epsilon_0(\rho'',\psi).$
\item If $R$ is a field, then $\epsilon_0(\rho,\psi)$ equals the modified epsilon factor of Theorem
\ref{modifiedepsilonfactor}.
\end{enumerate}
Moreover, the map $(R,\rho,\psi) \mapsto \epsilon_0(\rho,\psi)$ is uniquely determined by the above properties.
\end{prop}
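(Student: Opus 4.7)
The plan is to first reduce to the universal case of Theorem \ref{universal} and then leverage Yasuda's Theorem \ref{yasuda} in a descent-like manner. By the corollary to Theorem \ref{universal}, any $\ell$-adically continuous $\rho$ over $R$ is, Zariski-locally on $\Spec R$, the base change of the universal $\rho^{\nu}$ via some map $R^{\nu}\to R$. Property (1) forces $\epsilon_0(\rho,\psi)$ on each patch to be the image of $\epsilon_0(\rho^{\nu},\psi)\in (R^{\nu})^{\times}$, and by functoriality applied to the overlap maps these local definitions glue. Thus it suffices to construct and prove uniqueness of $\epsilon_0(\rho^{\nu},\psi)$ in $(R^{\nu})^{\times}$.

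For uniqueness on $R^{\nu}$: by Theorem \ref{universal}, for each maximal ideal $\fm$ of $R^{\nu}$ with residue field $k$, the completion $(R^{\nu})^{\wedge}_{\fm}$ is a complete Noetherian local $W(k)$-algebra, so property (1) together with Yasuda's Theorem \ref{yasuda} pins down the image of $\epsilon_0(\rho^{\nu},\psi)$ in each $((R^{\nu})^{\wedge}_{\fm})^{\times}$. The natural map $R^{\nu}\to \prod_{\fm}(R^{\nu})^{\wedge}_{\fm}$ is injective: if $f$ lies in the kernel, then for every $\fm$ there is some $s\notin \fm$ with $sf=0$ (using Krull's intersection theorem inside each Noetherian local $R^{\nu}_{\fm}$), so $\operatorname{Ann}(f)$ avoids every maximal ideal, forcing $\operatorname{Ann}(f)=R^{\nu}$ and hence $f=0$. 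Consequently $\epsilon_0(\rho^{\nu},\psi)$, if it exists, is uniquely determined by its images in the completions.

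For existence, I would combine Yasuda's geometric construction of $\epsilon_0$ (via the Fourier-Deligne transform, valid over any Noetherian base whose units are $p$-divisible) with the $\ell$-adic separatedness of $R^{\nu}$. For each $n$, $R^{\nu}/\ell^n R^{\nu}$ is a finitely generated algebra over the Artin local ring $W(k)/\ell^n$, so Yasuda's construction produces a unit $\epsilon_0^{(n)}\in (R^{\nu}/\ell^n R^{\nu})^{\times}$, and these are compatible in $n$. They assemble to an element of the $\ell$-adic completion $\widehat{R^{\nu}}^{\times}$, and the uniqueness statement of the previous paragraph, together with $R^{\nu}$ being $\ell$-adically separated, reduced, and $\ell$-torsion free, identifies this limit with a unique element of $(R^{\nu})^{\times}$. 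Properties (2) and (3) of the proposition then follow from the corresponding properties in Yasuda's theorem, verified after pushforward to each completion and transferred back by the injectivity of the previous paragraph.

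The principal obstacle is the existence step: descending the element of $\widehat{R^{\nu}}$ (or of $\prod_{\fm}(R^{\nu})^{\wedge}_{\fm}$) down to $R^{\nu}$ itself. The reducedness, $\ell$-torsion-freeness, and $\ell$-adic separatedness of $R^{\nu}$ supplied by Theorem \ref{universal} are exactly the right tools, but carrying out the descent rigorously seems to require either passing through Yasuda's explicit Fourier-Deligne machinery on the $R^{\nu}/\ell^n$ and invoking the $\ell$-adic separatedness of $R^{\nu}$, or alternatively invoking \v{C}esnavi\v{c}ius's integrality results on normal $\ZZ[\tfrac{1}{p}]$-schemes for the generic fiber and patching with the mod-$\ell^n$ picture.
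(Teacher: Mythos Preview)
Your overall strategy (reduce to $R^{\nu}$, then glue) matches the paper's, and your uniqueness argument is fine. But there are two genuine gaps.

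\textbf{Existence over $R^{\nu}$.} You acknowledge this is the obstacle, and you are right that your sketch does not resolve it. First, Yasuda's theorem as quoted (Theorem~\ref{yasuda}) applies only to complete Noetherian \emph{local} rings with residue field $k$; the rings $R^{\nu}/\ell^n R^{\nu}$ are not local, so invoking Yasuda there requires results not stated in the paper. Second, even if you produced an element of the $\ell$-adic completion $\widehat{R^{\nu}}$, $\ell$-adic separatedness only gives injectivity $R^{\nu}\hookrightarrow \widehat{R^{\nu}}$, not surjectivity onto your candidate; you have no mechanism to force the limit to lie in $R^{\nu}$. The paper takes a different route: it writes down the candidate in the total quotient ring $R'$ of $R^{\nu}$ (a finite product of characteristic-zero fields, where Deligne's classical $\epsilon_0$ exists), and then uses Yasuda's theorem at the completion $\hat R_x$ of each point $x$ to show that this generic-fiber element is actually integral over every local ring $R_x$, hence lies in $R^{\nu}$. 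The key commutative-algebra step is that an element of $R'$ which lands in $\hat R_x$ (via $\hat R_x'$) for every $x$ must lie in $R^{\nu}$.

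\textbf{Gluing for arbitrary $R$.} You write that ``by functoriality applied to the overlap maps these local definitions glue.'' This hides a real issue. On an overlap $U_j\cap U_{j'}=\Spec R_{jj'}$ you do not have a single map from $R^{\nu}$; you have \emph{two} maps $f_j,f_{j'}:R^{\nu}\to R_{jj'}$ arising from two different pseudo-framings, and property~(1) does not by itself give $f_j(\epsilon_0(\rho^{\nu},\psi))=f_{j'}(\epsilon_0(\rho^{\nu},\psi))$. What is needed is that $\epsilon_0(\rho^{\nu},\psi)$ is $G^{\nu}$-invariant. The paper proves this by adjoining indeterminates to $R^{\nu}$ so that the change-of-framing element lifts, and then checking the equality on characteristic-zero points (which are dense in the reduced, $\ell$-torsion-free ring $\tilde R^{\nu}$). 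Without this step, your gluing argument is incomplete.
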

\begin{proof}
We first prove this in the case $R = R^{\nu}, \rho = \rho_{\nu}$ for some $\nu$ as in Section \ref{arbitrary}.  In particular $R$ is reduced and $\ell$-torsion free.

Let $R'$ be the total quotient ring of $R$; then $R'$ is a product of fields of characteristic zero, so we may construct an element $\epsilon_0(\rho\otimes_RR',\psi)$. On the other hand if $x$ is any point of $\Spec(R)$ and $\hat{R}_x$ denotes the completion of the local ring $R_x$ at $x$ then we have $\epsilon_0(\rho\otimes_R\hat{R}_x,\psi)$ from Theorem \ref{yasuda}.

If $\hat{R}_x'$ denotes the total quotient ring of $\hat{R}_x$, we have natural maps $R'\to \hat{R}'_x$ and $\hat{R_x}\to \hat{R}'_x$. Since $\epsilon_0(\rho\otimes_RR',\psi)$ is compatible with base change component-wise, the images of $\epsilon_0(\rho\otimes_RR',\psi)$ and $\epsilon_0(\rho\otimes_R\hat{R}_x,\psi)$ in $\hat{R}_x'$ coincide. Therefore we would like to show that if an element $\epsilon$ of $R'$ has the property that for all $x$ there exists an element $\epsilon_x$ of $\hat{R}_x$ whose image in $\hat{R}_x'$ coincides with the image of $\epsilon$ in $\hat{R}_x'$, then $\epsilon$ lies in $R$.

We first show that $\epsilon_x$ lies in the localization $R_x$. Since there exists some nonzerodivisor $b$ of $R$ such that $b\epsilon$ lies in $R$, the product $b\epsilon_x$ must lie in the subset $R_x$ of $\hat{R}_x$. We must show that $b\epsilon_x$ lies in $bR_x$. Completing the exact sequence $$0\to R_x\to R_x\to R_x/bR_x\to 0$$ shows that the completion of the local ring $R_x/bR_x$ at $x$ is $\hat{R}_x/b\hat{R}_x$, and there is an injection $R_x/bR_x\to \hat{R}_x/b\hat{R}_x$. This shows that an element of $b\hat{R}_x$ that is in $R_x$ lies in $bR_x$. Therefore $b\epsilon_x$ lies in $bR_x$ and, since $b$ is a nonzerodivisor, it must be that $\epsilon_x$ lies in $R_x$.

Thus we can view $\epsilon$ and $\epsilon_x$ as functions defined on open subsets of $\Spec(R)$. Let $U$ be an open subset on which $\epsilon$ is defined. Since $\epsilon$ maps to each $\epsilon_x$ in the total quotient ring $R_x'$, there is a neighborhood $U_x$ of $x$ such that $\epsilon_x$ is defined on $U_x$ and such that the restriction of $\epsilon$ to $U\cap U_x$ agrees with $\epsilon_x$. By gluing, we see that $\epsilon$ extends to $\Spec(R)$ and therefore defines an element of $R$. If $\epsilon$ and each $\epsilon_x$ were units in $R'$ and $\hat{R}_x$, the same argument shows that $\epsilon^{-1}\in R$.

Now assume $R$ and $\rho$ are arbitrary satisfying the hypotheses of the proposition.  Then, since $R$ is Noetherian, $\Spec(R)$ has finitely many connected components, and we easily reduce to the case where $\Spec(R)$ is connected.  Then $\rho$ has a type $\nu$.  There is thus an affine open cover $U_j = \Spec R_j$ of $\Spec R$, and for each $j$, a map $f_j$ from $R^{\nu}$ to $R_j$ such that the representation $\rho_j$ given by $\rho \otimes_R R_j$ arises by base change from $R^{\nu}$.  We then set:
$$\epsilon_0(\rho_j,\psi) = f_j(\epsilon_0(\rho_{\nu},\psi)).$$

We must show that for any $j,j'$ the restrictions to $U_j \cap U_{j'}$ of $\epsilon_0(\rho_j,\psi)$ and $\epsilon_0(\rho_{j'},\psi)$ agree.  Certainly the restrictions of $\rho_j$ and $\rho_{j'}$ to $U_j \cap U_{j'}$
agree, but the maps $f_j$ and $f_{j'}$ are induced by choices of pseudo-framings of $\rho_j$ and $\rho_{j'}$
and neither these choices nor the corresponding maps need agree on the overlap.  However, if we set $U_j \cap U_{j'} = \Spec R_{jj'}$, there is an element $g$ of $G^{\nu}(R_{jj'})$ that carries the pseudo-framing of $\rho_j$ to that of
$\rho_{j'}$.  Since $G^{\nu}$ is a product of groups of the form $GL_{n_{\tau}}$ we may regard $g$ as a collection of matrices with entries in $R_{jj'}$.

If the entries of $g$ were in the image of the map $f_j: R^{\nu} \rightarrow R_{jj'}$, we could lift $g$ to an element
$\tilde g$ of $G^{\nu}(R^{\nu})$, and we would have $f_{j'} = f_j \circ \tilde g$.  Since $\rho_{\nu}$ is invariant under $\tilde g$, so is $\epsilon_0(\rho_{\nu},\psi)$, and thus 
$$f_{j'}(\epsilon_0(\rho_{\nu}, \psi)) = f_j(g(\epsilon_0(\rho_{\nu},\psi))) = f_j(\epsilon_0(\rho_{\nu},\psi))$$
as desired.

Of course, in general there is no reason to expect that we can lift $g$ to an element of $G^{\nu}(R^{\nu})$.  Let
${\tilde R}^{\nu}$ be the ring $R^{\nu}[X_1,\dots,X_r]$, where there is one indeterminate $X_r$ for each entry of
$g$.  We have a map ${\tilde f}_j: {\tilde R}^{\nu} \rightarrow R_{jj'}$ that agrees with $f_j$ on $R^{\nu}$ and sends each $X_i$ to the corresponding entry of $g$.  It is then clear that we can lift $g$ to an element ${\tilde g}$ of ${\tilde R}^{\nu}$.

Let ${\tilde \rho}^{\nu}$ be the base change of $\rho^{\nu}$ to ${\tilde R}^{\nu}$ under the natural inclusion $h_j$ of $R^{\nu}$ into ${\tilde R}^{\nu}$.  Acting by ${\tilde g}$ on the pseudo-framing of ${\tilde \rho}^{\nu}$ gives a new map $h_{j'}: R^{\nu} \rightarrow {\tilde R}^{\nu}$.  We have $f_{j'} = {\tilde f}_j \circ h_{j'}$, and $f_j = {\tilde f}_j \circ h_j$.  It thus suffices to show that $h_j(\epsilon_0(\rho_{\nu},\psi)) = h_{j'}(\epsilon_0(\rho_{\nu},\psi))$.

Let $x$ be a map from ${\tilde R}^{\nu}$ to an algebraically closed field $K$ of characteristic zero.  Then the base changes of $\rho^{\nu}$ along $x \circ h_j$ and $x \circ h_{j'}$ are isomorphic as representations; their pseudo-framings differ by the element $x(\tilde g)$ of $G^{\nu}(K)$.  Thus we have:
$$x(h_j(\epsilon_0(\rho_{\nu},\psi))) = x(h_{j'}(\epsilon_0(\rho_{\nu},\psi))).$$
Since ${\tilde R}^{\nu}$ is reduced and $\ell$-torsion free we have
$$h_j(\epsilon_0(\rho_{\nu},\psi)) = h_{j'}(\epsilon_0(\rho_{\nu},\psi))$$
as claimed.

Therefore the elements $f_j(\epsilon_0(\rho_{\nu},\psi))$ glue to form an element $\epsilon_0(\rho,\psi)$ of $R$.

It is clear that with this definition the association $(R,\rho,\psi) \mapsto \epsilon_0(\rho,\psi)$ has the claimed properties.  For uniqueness, note that since $R_{\nu}$ is reduced and $\ell$-torsion free, characteristic zero points of $\Spec(R_{\nu})$ are dense in $\Spec(R_{\nu})$.  Thus the listed properties uniquely determine $\epsilon_0(\rho_{\nu},\psi)$.  Since any $\rho$ arises Zariski locally by base change from this case, we have uniquely determined $\epsilon_0(\rho,\psi)$ for an arbitrary $\rho$ as well.
\end{proof}
Thus, for any Noetherian $W(k)$-algebra with connected spectrum (or for $R^{\nu}$) we can define the monomial $\epsilon_0(\rho,X,\psi)$ as in Equation \ref{modifiedepsilonwithX}, using the interpolated Swan conductor $\Sw_R$.

\begin{rmk} \rm The argument used to construct $\epsilon_0(\rho,\psi)$ is slightly ad-hoc.  The reader who is acquainted with stacks might prefer the following more abstract argument (which is essentially equivalent to the above): the element
$\epsilon_0(\rho^{\nu},\psi)$ is $G^{\nu}$-invariant (as can be checked on characteristic zero points), and so descends to a function on the quotient stack $(\Spec R^{\nu})/G^{\nu}$.  As explained earlier, this quotient stack is a moduli stack for $\ell$-adically continuous representations of $W_F$ of type $\nu$ on locally free $R$-modules.  Thus for an arbitrary $R$, an $\ell$-adically continuous $\rho$ gives rise to a unique map from $\Spec R$ to $(\Spec R^{\nu})/G^{\nu}$, and $\epsilon_0(\rho,\psi)$ is simply the pullback of $\epsilon_0(\rho^{\nu},\psi)$ along this map.
\end{rmk}

\section{Interpolating the ratio of $L$-functions}
\label{provingrestofgamma}
An easy calculation shows that 
\begin{align*}
\det\left(-\Fr X\vL {\rho^I}\right)^{-1}\frac{L(\rho^{\vee},\frac{1}{qX})}{L(\rho,X)} &=\frac{q^{\dim\rho^I}\det\left(I -\Fr X\vL{\rho^I}\right)}{\det\left(-\Fr qX\vL{\rho^I}\right)\det\left(I-\Fr^{-1}(qX)^{-1}\vL{\rho^I}\right)}\\
&=\frac{q^{\dim\rho^I}L(\rho,qX)}{L(\rho,X)}
\end{align*}

The ratio $\frac{q^{\dim\rho^I}L(\rho,qX)}{L(\rho,X)}$ divides $\gamma(\rho,X,\psi)$, and is defined in terms of the space of inertial invariants, which does not interpolate. Our next result, Theorem \ref{restofgamma}, says that when $\rho$ is tamely ramified semisimple this ratio admits an explicit description in terms of characteristic polynomials (denoted ``$\Char$'' below) of operators generated by $\Fr$ and a generator $\sigma$ of tame inertia, without any mention of inertial invariants. Theorem \ref{restofgamma} will allow us to deduce in Section \ref{gammaforrings} that $\gamma(\rho,X,\psi)$ interpolates.

\begin{thm}
\label{restofgamma}
Let $\kappa$ be a field of characteristic zero, and let $\rho:W_F\to GL_n(\kappa)$ be a semisimple smooth representation. Let $\sigma$ be the image in $\rho$ of a topological generator of tame inertia. If $\rho$ is tamely ramified, then
\begin{align}
\label{theeqn1}
\det(1+\sigma+...+\sigma^{q-1})&=q^{\dim\rho^I}\\
\label{theeqn2}
\frac{\Char(\Fr)(X)}{\Char\big((1+\sigma+...+\sigma^{q-1})\Fr\big)(X)}&=\frac{L(\rho,qX)}{L(\rho,X)}
\end{align}
\end{thm}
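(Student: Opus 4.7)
The plan is to exploit the $\sigma$-eigenspace decomposition of $\rho$ and reduce both assertions to elementary identities about roots of unity. Since $\rho$ is smooth, semisimple, and tamely ramified over a characteristic-zero field, $\sigma$ has finite order and acts diagonalizably, so I would write
$$\rho = \bigoplus_{\zeta \in S} V_\zeta$$
with $S$ the finite set of $\sigma$-eigenvalues (roots of unity of order prime to $p$) and $V_1 = \rho^I$. The Weil relation $\Fr \sigma \Fr^{-1} = \sigma^q$ implies $\Fr(V_\zeta) \subseteq V_{\phi(\zeta)}$ for $\phi(\zeta) := \zeta^q$. Using invertibility of $\Fr$ together with finiteness of $S$, $\phi$ restricts to a dimension-preserving bijection $\phi: S \to S$. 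A corollary I would extract early is that no nontrivial $q$-th root of unity lies in $S$ (otherwise both it and $1$ would have image $1$ under $\phi$), so the scalars $c_\zeta := 1 + \zeta + \cdots + \zeta^{q-1}$ by which $T := 1 + \sigma + \cdots + \sigma^{q-1}$ acts on $V_\zeta$ are all nonzero: $c_1 = q$ and $c_\zeta = (\zeta^q - 1)/(\zeta - 1)$ otherwise.

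For equation (\ref{theeqn1}), $\det(T) = q^{\dim V_1} \prod_{\zeta \in S \setminus \{1\}} c_\zeta^{\dim V_\zeta}$; reindexing the numerators via the bijection $\phi$ on $S \setminus \{1\}$ and invoking $\dim V_\zeta = \dim V_{\phi(\zeta)}$ makes the residual product telescope to $1$, yielding $\det(T) = q^{\dim \rho^I}$.

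For equation (\ref{theeqn2}), both $V_1$ and $\rho' := \bigoplus_{\zeta \neq 1} V_\zeta$ are $\Fr$-stable, so the characteristic polynomials factor across this decomposition. On $V_1$, $T\Fr = q\Fr$, which gives directly
$$\frac{\Char(\Fr|_{V_1})(X)}{\Char(T\Fr|_{V_1})(X)} = \frac{\det(I - \Fr X \mid V_1)}{\det(I - q\Fr X \mid V_1)} = \frac{L(\rho, qX)}{L(\rho, X)},$$
so it suffices to show that $\Fr$ and $T\Fr$ have the same characteristic polynomial on $\rho'$. Decompose $\rho'$ into $\Fr$-stable blocks $W = V_\zeta \oplus V_{\phi(\zeta)} \oplus \cdots \oplus V_{\phi^{r-1}(\zeta)}$, one per $\phi$-orbit of length $r$ in $S \setminus \{1\}$. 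Because $\Fr$ cyclically permutes the summands of $W$, a direct computation in a basis of $V_\zeta$ transported by $\Fr$ gives $\Char(\Fr|_W)(X) = \det(I - \Fr^r|_{V_\zeta}\, X^r)$; applying the same observation to $T\Fr$ yields $\Char(T\Fr|_W)(X) = \det(I - (\prod_{i=0}^{r-1} c_{\phi^i(\zeta)})\, \Fr^r|_{V_\zeta}\, X^r)$, and the cyclic telescope $\prod_i c_{\phi^i(\zeta)} = 1$ (the same calculation as for (\ref{theeqn1})) equates the two.

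The conceptual heart is the dimension-preserving bijection $\phi: S \to S$ together with the orbit-telescoping identity $\prod_i c_{\phi^i(\zeta)} = 1$; once these are in hand, both (\ref{theeqn1}) and (\ref{theeqn2}) are formal. The main bookkeeping obstacle is the block-cyclic computation of $\Char(\Fr|_W)$, which ultimately boils down to the identity $\prod_{k=0}^{r-1}(1 - \omega^k \alpha X) = 1 - \alpha^r X^r$ for $\omega$ a primitive $r$-th root of unity; bundling eigenspaces by $\phi$-orbit (rather than proceeding one $V_\zeta$ at a time) is what keeps this step short.
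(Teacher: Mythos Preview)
Your argument is correct and takes a genuinely different route from the paper's. The paper reduces to \emph{irreducible} summands and then invokes the classification of irreducible tamely ramified $W_F$-representations via admissible pairs (Bushnell--Henniart): every such representation is either a character or $\Ind_{E/F}\xi$ for an unramified extension $E/F$, and in the induced case $\rho(\sigma)$ and $\rho(\Fr)$ have a completely explicit shape (diagonal, respectively diagonal times a cyclic permutation matrix), from which both identities are read off case by case. You bypass this classification entirely, working directly with the $\sigma$-eigenspace decomposition and the permutation of eigenvalues forced by the Weil relation; both identities then collapse to the single telescoping identity $\prod_{i=0}^{r-1} c_{\phi^i(\zeta)} = \prod_i \frac{\phi^{i+1}(\zeta)-1}{\phi^i(\zeta)-1} = 1$ on each nontrivial $\phi$-orbit. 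Your approach is more elementary and self-contained (no external structure theory needed), while the paper's approach makes the irreducible case maximally explicit; your orbit blocks $W$ are in general direct sums of several isomorphic irreducibles, which is harmless here since only characteristic polynomials are at stake.

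One minor slip worth noting: from $\Fr\sigma\Fr^{-1}=\sigma^q$ one actually obtains $\sigma\Fr = \Fr\sigma^{q'}$ (with $qq'\equiv 1$ modulo the order of $\sigma$), so $\Fr(V_\zeta)\subseteq V_{\zeta^{q'}}$ rather than $V_{\zeta^q}$. This does not affect anything: since $\sigma$ and $\sigma^q$ are conjugate, $\zeta\mapsto\zeta^q$ is still a dimension-preserving bijection of $S$ fixing $1$, with the same orbits as its inverse, and that is all you use. (Alternatively, the exclusion of nontrivial $q$-th roots of unity from $S$ follows even more directly from the fact that $\sigma$ has order prime to $p$.)
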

Before embarking on the proof of Theorem \ref{restofgamma} we will require some machinery. 
Given an extension $E/F$ of fields, let $N_{E/F}$ denote the norm map $E^{\times}\to F^{\times}$ and given a character $\xi$ of $E^{\times}$ we treat $\xi$ as a character of $W_E$ via local class field theory, and $\Ind_{E/F}\xi$ will denote the representation of $W_F$ induced from the subgroup $W_E$.
\begin{defn}
Let $E/F$ be a finite tamely ramified extension and let $\xi$ be a character of $E^{\times}$. The pair $(E/F,\xi)$ is called \emph{admissible} if it satisfies the following two conditions as $K$ ranges over intermediate fields $F\subset K\subset E$:
\begin{enumerate}
\item If $\xi$ factors through the relative norm $N_{E/K}$, then $K=E$.
\item If $\xi|_{U_E^1}$ factors through $N_{E/K}$ then $E/K$ is unramified.
\end{enumerate}
\end{defn}
Suppose $\rho$ is an irreducible smooth tamely ramified representation of $W_F$ of dimension $d>1$. By \cite[A.3 Theorem]{bh_essentiallytame}, $\rho$ must have the form $\Ind_{E/F}\xi$ where $(E/F,\xi)$ form an admissible pair with $[E:F]=d$ (we implicitly identify $W_E^{ab}$ with $E^{\times}$ via local class field theory). Moreover, for $g\in \Gal(E/F)$, $g\neq 1$, the characters $\xi,\xi^{g},\xi^{g^2},\dots,\xi^{g^{d-1}}$, of $W_E^{ab}$ are all distinct.

\begin{lemma}
\label{irreducibleinductionisunramified}
Suppose $\rho$ is irreducible and tamely ramified of dimension $d>1$ over a field of characteristic zero, thus having the form $\Ind_{E/F}\xi$ for an admissible pair $(E/F,\xi)$. Suppose $\sigma$ is a topological generator of the tame inertia quotient $I_F/P_F$.
\begin{enumerate}
\item $E/F$ is unramified,
\item $\rho(\sigma)=\left(\begin{smallmatrix}\xi(\sigma)\\
&\ddots&\\
&& \xi^{q^{d-1}}(\sigma)\end{smallmatrix}\right)$,
\item $\xi$ is tamely ramified, $\xi(\sigma)$ is a $q^d-1$'st root of unity, and $d$ is minimal among positive integers $m$ satisfying $\xi(\sigma)^{q^m-1}=1$.
\item $\rho(\Fr)$ is the product of a diagonal matrix with a permutation matrix corresponding to a $d$-cycle in $S_d$.
\end{enumerate}
\end{lemma}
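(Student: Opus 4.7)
The plan is to handle the four claims in order, using the structure of $\rho = \Ind_{E/F}\xi$ together with the two admissibility conditions for $(E/F,\xi)$.

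For (1), since $E/F$ is tame we have $P_F = P_E \subset W_E$, and $P_F$ is normal in $W_F$, so the restriction of $\rho$ to $P_F$ is isomorphic to $d$ copies of $\xi|_{P_F}$. Hence $\rho$ tame forces $\xi$ tame, so $\xi|_{U_E^1}=1$. This trivial character factors through every relative norm $N_{E/K}$, and admissibility condition (2) then forces $E/K$ to be unramified for every intermediate $K$; taking $K=F$ gives~(1).

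For (2) and (4), fix $g=\Fr_F$, so that $\{1, g, \dots, g^{d-1}\}$ represents $W_F/W_E$ and $g^d$ lies in $W_E$ as a Frobenius of $E$ (using (1)). In this basis the action of any $w\in W_E$ is diagonal with $(i,i)$-entry $\xi^{g^i}(w)$. Applying this to $\sigma\in I_F = I_E$ and using the standard identity $g\sigma g^{-1}\equiv \sigma^q \pmod{P_F}$ together with the tameness of $\xi$ yields diagonal entries $\xi(\sigma)^{q^i}$, proving~(2). For (4), $\Fr_F$ sends $g^i\mapsto g^{i+1}$ when $i<d-1$ and $g^{d-1}\mapsto g^d\in W_E$, so $\rho(\Fr_F)$ is a $d$-cycle permutation matrix multiplied by the diagonal matrix $\operatorname{diag}(1,\dots,1,\xi(g^d))$.

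The main step, and the only delicate one, is (3). Tameness lets $\xi|_{\mathcal{O}_E^\times}$ factor through $k_E^\times = \mu_{q^d-1}$, so $\xi(\sigma)^{q^d-1}=1$. To see that $d$ is minimal, suppose $\xi(\sigma)^{q^m-1}=1$ for some $0<m<d$; after replacing $m$ by $\gcd(m,d)$, we may assume $m\mid d$. Then $\xi^{g^m}$ and $\xi$ agree on $\sigma$, and the plan is to upgrade this to a global equality $\xi^{g^m}=\xi$ of characters of $E^\times$ so that admissibility~(1) can be applied. For the unit part, $\sigma$ topologically generates $I_F/P_F$, so its image in the finite cyclic quotient $k_E^\times$ is a generator; hence any two tame characters of $W_E$ agreeing on $\sigma$ must agree on all of $\mathcal{O}_E^\times$. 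For the uniformizer direction, $\pi_F$ is also a uniformizer of $E$ (by (1)) and is fixed by $g$, so $\xi^{g^m}(\pi_F)=\xi(\pi_F)$. Together these give $\xi^{g^m}=\xi$ on $E^\times$. By Hilbert~90 applied to the cyclic extension $E/K$ with $K=E^{\langle g^m\rangle}$, this forces $\xi$ to factor through $N_{E/K}$, and admissibility~(1) then requires $K=E$, i.e., $g^m=1$, contradicting $m<d$. The delicate point is precisely this upgrade from equality at the single element $\sigma$ to a global identity of characters; it relies on both the topological generator property of $\sigma$ (controlling $\mathcal{O}_E^\times$) and the unramifiedness of $E/F$ (controlling the uniformizer direction).
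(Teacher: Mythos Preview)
Your proof is correct and follows essentially the same route as the paper's: both use tameness of $\rho$ to force $\xi|_{U_E^1}=1$ and then admissibility (2) for part (1), the standard coset basis of the induced representation for (2) and (4), and the relation $\Fr\,\sigma\,\Fr^{-1}\equiv\sigma^q$ for the diagonal entries. The only place you diverge is in the minimality claim in (3): the paper simply invokes the distinctness of the conjugates $\xi,\xi^{\Fr},\dots,\xi^{\Fr^{d-1}}$ (established just before the lemma from irreducibility) and implicitly uses that, since $E/F$ is unramified, these conjugates all agree on $\pi_F$ and hence must already differ at $\sigma$; you instead unpack this step explicitly via Hilbert~90 and admissibility condition~(1). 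Your version is more self-contained, while the paper's is terser but relies on the reader supplying the ``agree on $\pi_F$'' observation.
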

\begin{proof}
We use Mackey's restriction formula $\rho|_{I_F} = \bigoplus_{y\in W_E\backslash W_F / I_F}\Ind_{I_F\cap W_E^y}^{I_F}(\xi^y)$. If $\xi$ were nontrivial on the wild inertia subgroup $P_E$, then $\rho$ would be nontrivial on $P_F$. Therefore, the character $\xi|_{U_E^1}$ is trivial. Since $(E/F,\xi)$ is an admissible pair, $E/F$ must be unramified. In this case, $I_E=I_F$ and there are $d$ double cosets in $W_E\backslash W_F/I_F$ with representatives $1,\Fr,\dots,\Fr^{d-1}$. Since $\rho$ has dimension $d$, each component is one-dimensional, and therefore given by $\xi^{\Fr^i}$ for powers $i$. Replacing $\rho(x)$ with $\rho(\Fr x \Fr^{-1})$ has the effect of cyclically permuting the summands. Since $\sigma$ satisfies $\Fr\sigma\Fr^{-1}=\sigma^q$, we have $\xi^{\Fr}=\xi^q$. Since  $\xi,\xi^{\Fr},\xi^{\Fr^2},\dots,\xi^{\Fr^{d-1}}$ are all distinct, $\xi(\sigma)$ is a $q^d-1$'st root of unity, and $d$ is minimal with this property.
\end{proof}

The second lemma is a simple fact from linear algebra:

\begin{lemma}
\label{permutationdiagonal}
Suppose $D$ is a $d\times d$ diagonal matrix and $P$ is a $d\times d$ permutation matrix corresponding to a $d$-cycle. Then the characteristic polynomial of $DP$ has the form $X^d + (-1)^d\det(DP)$.
\end{lemma}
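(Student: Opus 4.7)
The plan is to expand the characteristic polynomial $\det(XI - DP)$ in terms of principal minors of $DP$ and show that all the intermediate coefficients vanish, using that the $d$-cycle has no nontrivial invariant subsets.

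First I would recall the classical identity
\[
\det(XI - DP) = \sum_{k=0}^{d} (-1)^k \, e_k(DP) \, X^{d-k},
\]
where $e_k(DP)$ denotes the sum of the $k\times k$ principal minors of $DP$. The two endpoint terms already give what we want: $e_0(DP) = 1$ contributes the leading $X^d$, while $e_d(DP) = \det(DP)$ contributes the constant term $(-1)^d\det(DP)$. So the entire content of the lemma is the vanishing $e_k(DP) = 0$ for $1 \le k \le d-1$.

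For this, fix a subset $S \subset \{1,\ldots,d\}$ with $|S| = k$, and consider the corresponding principal minor, i.e.\ $\det\bigl((DP)_{S,S}\bigr)$. Let $\sigma \in S_d$ be the $d$-cycle associated to $P$, so the only nonzero entries of $DP$ lie at positions $(i,\sigma^{-1}(i))$ (with value $a_i$, the $i$-th diagonal entry of $D$). In the Leibniz expansion of $\det\bigl((DP)_{S,S}\bigr)$, the only permutation of $S$ that can contribute a nonzero term is the restriction of $\sigma^{-1}$ to $S$, and this only makes sense as a permutation of $S$ provided $\sigma(S) = S$. Since $\sigma$ acts transitively on $\{1,\ldots,d\}$, the only $\sigma$-stable subsets are $\emptyset$ and the whole set, so for $0 < k < d$ no subset $S$ gives a nonzero contribution, whence $e_k(DP) = 0$.

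There is really no hard step here; the only thing to be careful about is the bookkeeping, matching the convention for the permutation matrix $P$ (left versus right action) so that the claim ``$(DP)_{S,S}$ has nonzero determinant only if $S$ is $\sigma$-stable'' is stated consistently. Once that is fixed, the lemma follows immediately by combining the two displays above.
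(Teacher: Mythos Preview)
Your argument is correct. The paper takes a different route: it observes that for $T = DP$ the first standard basis vector $v_1$ is cyclic (since the $d$-cycle carries $v_1$ through each coordinate line exactly once), so $\{v_1, Tv_1, \dots, T^{d-1}v_1\}$ is a basis in which $T$ becomes $\left(\begin{smallmatrix} & \det(D) \\ I_{d-1} & \end{smallmatrix}\right)$, with characteristic polynomial visibly $X^d - \det(D) = X^d + (-1)^d\det(DP)$. Your approach is instead combinatorial, expanding the characteristic polynomial in principal minors and using the transitivity of the $d$-cycle to force every intermediate $e_k(DP)$ to vanish. Both are entirely elementary; the paper's cyclic-vector method is a one-line change of basis that avoids summing over subsets, while your argument stays in the original coordinates and makes the key feature of the $d$-cycle (no proper nonempty invariant subset) more explicit.
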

\begin{proof}
If $v_1$ is the first basis vector, and $T=DP$, then $\{v_1, Tv_1,\dots,T^{d-1}v_1\}$ is a new basis. The matrix for $T$ in the new basis is $\left(\begin{smallmatrix}
&\det(D)\\
I_{d-1}&\end{smallmatrix}\right)$.
\end{proof}

\begin{proof}[Proof of Thm \ref{restofgamma}]
Since both sides of Equations (\ref{theeqn1}) and (\ref{theeqn2}) are multiplicative in direct sums, we can reduce from the semisimple case to the irreducible case.

Suppose $\rho$ is irreducible, so that according to Lemma \ref{irreducibleinductionisunramified}, there are three possible cases: $\rho$ is an unramified character, $\rho$ is a tamely ramified character, or $\dim\rho>1$ and $\rho=\Ind_{E/F}\xi$ as in Lemma \ref{irreducibleinductionisunramified}.
\begin{enumerate}
\item If $\rho$ is an unramified character, $\sigma = 1$. Therefore, $\det(1+\dots+\sigma^{q-1})=q=q^{\dim\rho^I}$, and $(1+\dots+\sigma^{q-1})\Fr=q\Fr$.
\item If $\rho$ is a tamely ramified character, then $\sigma$ is a primitive $q-1$'st root of unity, so $1+\dots+\sigma^{q-1}=1=q^{\dim\rho^I}$ and $(1+\dots+\sigma^{q-1})\Fr=\Fr$.
\item If $\rho$ is irreducible of dimension $d>1$, it is of the form $\Ind_{E/F}\xi$ where $\xi$ is a tamely ramified character of $W_E^{ab}$. Therefore $1+\dots+\sigma^{q-1}$ is equivalent to $$\left(\begin{smallmatrix}1+\xi(\sigma) \dots+\xi(\sigma)^{q-1}\\
&1+\xi(\sigma)^q+\dots+\xi(\sigma)^{q(q-1)}\\
&&\ddots&\\
&&& 1+\xi(\sigma)^{q^{d-1}}+\dots +\xi(\sigma)^{(q-1)q^{d-1}}\end{smallmatrix}\right).$$ Since $E/F$ is unramified of degree $d$, $\xi(\sigma)$ must be a nontrivial $q^d-1$'st root of unity. An easy calculation shows that $$\det(1+\dots+\sigma^{q-1})=1+\dots+\xi(\sigma)^{q^d-1} = 1=q^{\dim\rho^I}.$$ By Lemma \ref{irreducibleinductionisunramified}, $\Fr=DP$ is a diagonal matrix $D$ times a permutation matrix $P$. Since $1+\dots+\sigma^{q-1}$ is a diagonal matrix, so is the product $(1+\dots+\sigma^{q-1})D$. Therefore Lemma \ref{permutationdiagonal}, tells us that
\begin{align*}
\Char\big((1+\dots+\sigma^{q-1})\Fr\big)(X)&=X^d + (-1)^d\det\big((1+\dots+\sigma^{q-1})DP\big)\\
& = X^d + (-1)^d\det(\Fr)\\
&=\Char(\Fr)(X).
\end{align*}
\end{enumerate}
\end{proof}

\section{Proof of main theorem}
\label{gammaforrings}

In this section we prove Theorem \ref{mainthm}. The two key results used to establish Theorem \ref{mainthm} in the complete local setting are Theorem \ref{restofgamma} and Proposition \ref{epsilon_0reducedelltorsionfree}. 

\begin{lemma}
\label{frameinvariantsemisimple}
Let $\rho$ be an $\ell$-adic representation over $\kappa$. Let $w$ be an element of the group ring $\kappa[W_F]$, and consider $\rho$ as a homomorphism $\kappa[W_F]\to M_n(\kappa)$. Then $\det(\rho(w))=\det(\rho^{ss}(w))$ and $\Char(\rho(w))=\Char(\rho^{ss}(w))$.
\end{lemma}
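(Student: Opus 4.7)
The plan is to reduce both equalities to the standard fact that the characteristic polynomial (and hence the determinant) of a block upper-triangular matrix is the product of the characteristic polynomials of its diagonal blocks.

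First I would choose a composition series $0 = V_0 \subset V_1 \subset \cdots \subset V_m = V$ of the underlying representation space $V$ of $\rho$ as a $\kappa[W_F]$-module, and pick a $\kappa$-basis of $V$ that is adapted to this filtration. In this basis each operator $\rho(g)$, for $g \in W_F$, is block upper-triangular, with diagonal blocks giving the actions of $W_F$ on the successive quotients $\rho_i := V_i/V_{i-1}$. By $\kappa$-linearity this block upper-triangular form is preserved under arbitrary $\kappa$-linear combinations, so for any $w \in \kappa[W_F]$ the matrix $\rho(w)$ is block upper-triangular with $i$-th diagonal block equal to $\rho_i(w)$.

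By definition, $\rho^{ss}$ is the direct sum $\bigoplus_i \rho_i$, so in a suitable basis $\rho^{ss}(w)$ is the block-diagonal matrix with the very same diagonal blocks $\rho_i(w)$. Therefore
\[
\Char(\rho(w))(X) \;=\; \prod_i \Char(\rho_i(w))(X) \;=\; \Char(\rho^{ss}(w))(X),
\]
and taking the constant term (up to sign) gives $\det(\rho(w)) = \det(\rho^{ss}(w))$.

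There is no serious obstacle here; the only mild subtlety is to remember that the statement concerns $\rho$ merely as a $W_F$-representation (not as a Weil--Deligne representation), so ``semisimplification'' refers to the associated graded of a Jordan--H\"older filtration, which is exactly what the argument above uses.
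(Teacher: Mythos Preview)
Your argument is correct and is essentially the paper's proof spelled out in detail: the paper simply notes that $\det$ and $\Char$ are multiplicative in exact sequences, which is exactly your block upper-triangular computation iterated along a composition series.
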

\begin{proof}
Both $\det$ and $\Char$ are multiplicative in exact sequences.
\end{proof}

\begin{proof}[Proof of Theorem \ref{mainthm}] 
We say that $\rho$ is totally wildly ramified if $\rho^{P_F}=\{0\}$, where $P_F$ is the wild inertia subgroup of $I_F$. In the totally wildly ramified case, $\gamma(\rho,X,\psi)=\epsilon_0(\rho,X,\psi)$, so the result follows immediately from \cite[Thm 5.3]{yasuda} and Proposition \ref{elladicswan}.

In general, by Corollary \ref{cor:sum}, $\rho$ may be decomposed as a direct sum $\rho^0\oplus\rho^{>0}$ where $\rho^0$ is tamely ramified and $\rho^{>0}$ is totally wildly ramified. Therefore, if we had a suitable interpolation of $\gamma(\rho^0,X,\psi)$, we could define $$\gamma(\rho,X,\psi):=\gamma(\rho^0,X,\psi)\gamma(\rho^{>0},X,\psi) =\gamma(\rho^0,X,\psi)\epsilon_0(\rho^{>0},X,\psi),$$ and it would satisfy the required properties. Therefore we assume $\rho$ is tamely ramified.

We now prove the theorem when $(R,\rho)$ is of the form $(R^{\nu},\rho_{\nu})$.  In particular $R$ is reduced and $\ell$-torsion free.

For any characteristic zero point $x\in \Spec(R)$, $\det(\rho_x(1+\cdots+\sigma_x^{q-1}))=\det(\rho_x^{ss}(1+\cdots+\sigma_x^{q-1}))$ is a power of $q$ (see the proof of Theorem \ref{restofgamma}) and therefore defines a power of $q$ in $R/x$. Since any maximal ideal of $R$ contains such an $x$, $\det(\rho_x(1+\cdots+\sigma_x^{q-1}))$ is invertible modulo every maximal ideal, so is a unit in $R$. Thus the element $$\gamma_R(\rho,X,\psi):=\epsilon_0(\rho,\psi)X^{\Sw_R(\rho)+(\dim\rho)(n(\psi)+1)}\frac{\det(1+\sigma+...+\sigma^{q-1})\Char(\Fr)(X)}{\Char\big((1+\sigma+...+\sigma^{q-1})\Fr\big)(X)}$$ lives in $S^{-1}R[X,X^{-1}]$. Suppose that $x:R\to \kappa$ is a map to a field $\kappa$ of characteristic zero which is the fraction field of a complete local domain with residue field $k$. Compatibility with specialization means $\gamma_R(\rho,X,\psi)(x)$ coincides with the specialization
$$\epsilon_0(\rho_x,\psi_x)X^{\Sw(\rho_x)+(\dim\rho_x)(n(\psi_x)+1)}\frac{\det(1+\sigma_x+...+\sigma_x^{q-1})\Char(\Fr_x)(X)}{\Char\big((1+\sigma_x+...+\sigma_x^{q-1})\Fr_x\big)(X)}$$ at $x$. But $\epsilon_0(\rho_x,\psi_x)=\epsilon_0(\rho_x^{ss},\psi_x)$, and $$X^{\Sw(\rho_x)+(\dim\rho_x)(n(\psi_x)+1)}=X^{\Sw(\rho_x^{ss})+(\dim\rho_x^{ss})(n(\psi_x)+1)}.$$ Since $\rho_x^{ss}$ is smooth, Theorem \ref{restofgamma}, together with Lemma \ref{frameinvariantsemisimple}, tells us
$$\frac{\det(1+\sigma_x+...+\sigma_x^{q-1})\Char(\Fr_x)(X)}{\Char\big((1+\sigma_x+...+\sigma_x^{q-1})\Fr_x\big)(X)} = \frac{q^{\dim(\rho_x^{ss})^I}L(\rho_x^{ss},qX)}{L(\rho_x^{ss},X)}.$$ Therefore $\gamma_R(\rho,X,\psi)(x)=\gamma(\rho_x^{ss},X,\psi_x)$. But by Corollary \ref{gammasemisimplecharzero}, $\gamma(\rho_x^{ss},X,\psi_x)=\gamma(\rho_x,X,\psi_x)$.

If $(R,\rho)$ is arbitrary, then as in the proof of Proposition \ref{epsilon_0reducedelltorsionfree}, $\rho$ arises, Zariski locally on $\Spec R$, by base change from some product of $(R^{\nu_i},\rho_{\nu_i})$, and we define $\gamma_R(\rho,X,\psi)$ to be the series obtained by gluing the appropriate base changes of $\prod_i \gamma_{R^{\nu_i}}(\rho_{\nu_i},X,\psi)$.  The proof of uniqueness proceeds just as in the proof of Proposition \ref{epsilon_0reducedelltorsionfree}.
\end{proof}

\bibliography{mybibliography}{}
\bibliographystyle{alpha}
\end{document}